\documentclass[12pt,twoside,a4paper]{amsart}
\usepackage[centertags]{amsmath}
\usepackage{amssymb,amscd,amsfonts,latexsym,a4wide}
\usepackage{amsthm}
\usepackage{eucal}
\usepackage{array}
\usepackage{graphics}
\usepackage{amsmath}
\usepackage{amssymb}
\usepackage[latin2]{inputenc}


\renewcommand{\subjclass}[1]{\thanks{\emph{2010 Mathematics Subject Classification:}~#1}}
\renewcommand{\keywords}[1]{\thanks{\emph{Keywords and Phrases:}~#1}}
\renewcommand{\date}{\thanks{\today}}


\footnotesep=11.5pt

\setlength{\textheight}{20cm}

\setlength{\textwidth}{13.5cm}

\setlength{\oddsidemargin}{0.5cm}
\setlength{\evensidemargin}{\oddsidemargin}
\setlength{\topmargin}{-.5cm}



\headsep=20pt

\advance\headheight by 3pt

\pagenumbering{arabic}

\parskip=6pt

\newtheorem{thm}{Theorem}[section]
\newtheorem{lem}[thm]{Lemma}
\newtheorem{cor}[thm]{Corollary}

\newtheorem{thmp}[thm]{Proposition}

\theoremstyle{definition}

\newtheorem*{deff*}{Definition}

\newtheorem*{rem*}{Remark}

\newtheorem*{thmp*}{Proposition}

\newcommand{\thmref}[1]{Theorem \ref{#1}}

\newcommand{\propref}[1]{Proposition \ref{#1}}
\newcommand{\bnm}[2]{\mbox{$\textstyle{\binom{#1}{#2}}$}}

\newcommand{\fk}{{finite \'{e}tale $K$-algebra}}
\newcommand{\fek}{{finite \'{e}tale $K$-algebra }}
\newcommand{\Kbar}{\overline{K}}
\newcommand{\Qbar}{\overline{\mathbb{Q}}}
\newcommand{\kdots}{,\ldots ,}
\newcommand{\Deg}{{\rm Deg}\,}

\numberwithin{equation}{section}

\font\irott=rsfs10

\def\irF{\hbox{\irott F}}

\def\Qq{\mathbb{Q}}
\def\Zz{\mathbb{Z}}

\def\Ff{\mathbb{F}}

\def\O{O}
\def\M{\mathcal{M}}
\def\N{\mathcal{N}}

\def\house#1{\setbox1=\hbox{$\,#1\,$}%
\dimen1=\ht1 \advance\dimen1 by 2pt \dimen2=\dp1 \advance\dimen2
by 2pt
\setbox1=\hbox{\vrule height\dimen1 depth\dimen2\box1\vrule}%
\setbox1=\vbox{\hrule\box1}%
\advance\dimen1 by .4pt \ht1=\dimen1 \advance\dimen2 by .4pt
\dp1=\dimen2 \box1\relax}

\title[Discriminant equations]{Effective results for discriminant equations over finitely generated integral domains}

\author[J.-H. Evertse]{Jan-Hendrik Evertse}
\address{J.-H. Evertse\newline Leiden University, Mathematical Institute\newline
P.O.Box 9512, 2300 RA Leiden, The Netherlands}
\email{evertse@math.leidenuniv.nl}

\author[K. Gy\H{o}ry]{K\'{a}lm\'{a}n Gy\H{o}ry}
\address{K. Gy\H{o}ry\newline Institute of Mathematics, University of Debrecen\newline
H-4032 Debrecen, Egyetem T\'{e}r 1, Hungary}
\email{gyory@science.unideb.hu}

\begin{document}

\begin{abstract}
Let $A$ be an integral domain with quotient field $K$
of characteristic $0$ that is finitely generated
as a $\Zz$-algebra.
Denote by $D(F)$ the discriminant of a polynomial $F\in A[X]$.
Further, given a \fek $\Omega$, denote by $D_{\Omega /K}(\alpha )$
the discriminant of $\alpha$ over $K$.
For non-zero $\delta\in A$, 
we consider
equations
\[
D(F)=\delta
\]
to be solved in monic polynomials $F\in A[X]$ of given degree
$n\geq 2$ having their zeros in a given finite extension field $G$
of $K$, and
\[
D_{\Omega/K}(\alpha)=\delta\,\,\mbox{ in } \alpha\in O,
\]
where $O$ is an
$A$-order of $\Omega$, i.e., a subring of the integral closure of $A$ in
$\Omega$ that contains $A$ as well as a $K$-basis of $\Omega$.

In \cite[Chap. 10]{EvertseGyory16} we proved that if $A$ is effectively
given (in a well-defined sense described below) and integrally closed,
then up to natural notions of equivalence the above equations have only
finitely many solutions, and that moreover, a full system of representatives
for the equivalence classes can be determined effectively.
In the present paper, we extend these results to integral domains $A$
that are not necessarily integrally closed.
\end{abstract}

\date{}

\subjclass{Primary 11D99; Secondary 11D41}

\keywords{Discriminant equation, effective finiteness theorems}

\renewcommand{\baselinestretch}{1.1}
\maketitle

\section{Introduction}

Let $A$ be an integral domain of characteristic $0$ that is finitely generated
over $\Zz$ and denote by $K$ its quotient field.
We define the discriminant of a monic polynomial $F=X^n+a_1X^{n-1}+\cdots +a_n=(X-\alpha_1)\cdots (X-\alpha_n)$ by
\[
D(F):=\prod_{1\leq i<j\leq n} (\alpha_i-\alpha_j)^2.
\]
Recall that $D(F)\in\Zz [a_1\kdots a_n]$;
in fact, it is a polynomial of total degree $2n-2$ in $a_0\kdots a_n$.
Let $n$ be an integer $\geq 2$, let $\delta$ be a non-zero element of $A$ and 
let $G$ a finite extension of $K$.
We consider the equation
\begin{equation}\label{e10.1m}
\begin{array}{ll}
D(F)=\delta\ \ &\mbox{in monic polynomials $F\in A[X]$ of degree $n$}
\\
&\mbox{having all their zeros in $G$.}
\end{array}
\end{equation}
Two monic polynomials $F_1,F_2\in A[X]$ are called \emph{$A$-equivalent}
if $F_2(X)=F_1(X+a)$ for some $a\in A$. Clearly, $A$-equivalent polynomials
have the same discriminant, and so the solutions of equation
\eqref{e10.1m} can be divided into $A$-equivalence classes. 
We proved \cite[Thm. 10.1.1]{EvertseGyory16} the following result:
\\[0.2cm]
{\bf Theorem A.} \emph{Assume in addition to the above that $A$ is integrally closed.
Then the solutions of equation \eqref{e10.1m} lie in finitely many $A$-equivalence
classes. If moreover
$A$, $\delta$
and $G$ are effectively given (in a sense defined in the next section), 
then a full system of representatives of these $A$-equivalence classes
can be determined effectively.} 
\\[0.2cm]
In fact, this was a culmination of earlier results, due to Gy\H{o}ry,
who proved Theorem A first in the case $A=\Zz$
\cite{Gyory73}, then in the case that $A$ is the
ring of $S$-integers of a number field \cite{Gyory78a}, \cite{Gyory78b}, and 
then later for a special class of integral
domains containing transcendental elements \cite{Gyory84}.

In the present paper we prove a generalization of Theorem A, where instead
of integrally closed domains we consider 
integral domains $A$ such that
\begin{equation}\label{e10.1.2m}
\mbox{$\big(\frac{1}{n}A^+\cap A_K^+\big)/A^+$ is finite,}
\end{equation}
see Theorem \ref{t10.1.1m} below.
Here $B^+$ denotes the additive group
of a ring $B$, and $A_K$ denotes the integral closure of $A$ in $K$.
The class of domains with \eqref{e10.1.2m} contains all integrally 
closed domains,
the integral domains that contain $n^{-1}$, and also all finitely generated subrings
of $\Qbar$. We do not know if condition \eqref{e10.1.2m} 
is the weakest possible.
We will give an example of an integral domain $A$ and a field extension $G$
for which the finiteness part
of Theorem A is false. So some condition will have to be imposed on 
the domain $A$.

We also consider discriminant equations where the unknowns are elements
of orders of \fk s. Let for the moment $K$ be any field of characteristic $0$
and $\Omega$ a \emph{\fk ,} i.e., $\Omega = K[X]/(P)=K[\theta ]$,
where $P\in K[X]$ is some separable polynomial and $\theta := X\, {\rm mod}\, P$.
We write $[\Omega :K]:=\dim_K \Omega$; then clearly $[\Omega :K]=\deg P$.
Let $\Kbar$ be an algebraic closure of $K$.
By a \emph{$K$-homomorphism} of $\Omega$ to $\Kbar$ we mean
an non-trivial $K$-algebra homomorphism. 
There are precisely $n:=[\Omega :K]$ $K$-homomorphisms of $\Omega$ to $\Kbar$, which map $\theta$ to the $n$ distinct zeros of $P$ in $\Kbar$. 
We denote these by $x\mapsto x^{(i)}$ ($i=1\kdots n$). The \emph{discriminant}
of $\alpha\in\Omega$ over $K$ is given by
\[
D_{\Omega /K}(\alpha ):=\prod_{1\leq i<j\leq n}(\alpha^{(i)}-\alpha^{(j)})^2,
\]
where $\alpha^{(i)}$ denotes the image of $\alpha$ under $x\mapsto x^{(i)}$.
This is an element of $K$.
It is not difficult to show that $D_{\Omega /K}(\alpha +a)=D_{\Omega /K}(\alpha )$
for $\alpha\in\Omega$, $a\in K$. 
Further, $D_{\Omega /K}(\alpha )\not= 0$ if and only if $\Omega =K[\alpha ]$.
For more details on \fk s, we refer
to \cite[Chap. 1]{EvertseGyory16}.

Now let as above $A$ be an integral domain with quotient field $K$
of characteristic $0$ that is finitely generated over $\Zz$.
We denote by $A_{\Omega}$ the integral closure of $A$ in $\Omega$.
An \emph{$A$-order} is a subring of $A_{\Omega}$ that contains $A$, as well
as a $K$-basis of $\Omega$. 
In particular $A_{\Omega}$ itself is an $A$-order of $\Omega$.
We consider equations of the type
\begin{equation}\label{e10.1.6m}
D_{\Omega/K}(\alpha)=\delta\,\,\mbox{ in } \alpha\in \O
\end{equation}
where $\delta$ is a non-zero element of $A$, and $\O$ is an $A$-order of
$\Omega$. We call $\alpha_1 ,\alpha_2\in\O$ \emph{$A$-equivalent}
if $\alpha_1 -\alpha_2\in A$. 
Then the solutions of \eqref{e10.1.6m} can be divided into $A$-equivalence classes.
We recall Theorem 10.1.3 of \cite{EvertseGyory16}:
\\[0.2cm]
{\bf Theorem B.} \emph{Assume in addition to the above that $A$ is integrally closed.
Then the solutions
of equation \eqref{e10.1.6m} lie in finitely many $A$-equivalence classes.
If moreover $A$, $\delta$ and $\Omega$
are effectively given as defined in the next section
then a full system of representatives for these $A$-equivalence classes
can be determined effectively.}
\\[0.2cm]
Again, Gy\H{o}ry \cite{Gyory74}, \cite{Gyory76}, \cite{Gyory78a}, \cite{Gyory78b}, \cite{Gyory84} proved this result earlier in special cases.
In this paper we extend Theorem B to integral domains $A$
such that
\begin{equation}\label{e10.1.5m}
(\O\cap K)^+/A^+\ \ \mbox{is finite,}
\end{equation}
see Theorem \ref{t10.1.4m} below.
Notice that $O\cap K =A$ if
$A$ is integrally closed.
It is shown that the finiteness result becomes false if we weaken condition
\eqref{e10.1.5m}.

In Section \ref{section10.1m} we give the precise statements of our results.
Section \ref{section10.3m} contains some tools from effective commutative
algebra needed in this paper. Much of this has been taken from 
\cite[\S10.7]{EvertseGyory16}.
In
Section \ref{section10.2m} we recall from \cite[Chap. 10]{EvertseGyory16}
a central proposition,
which is the basis of the proofs of the results in the present paper.
The main tool in the proof of that Proposition is Corollary 1.2 of \cite{EvertseGyory13}
on unit equations over finitely generated integral domains.
In the remaining sections we deduce our theorems. 

\section{Statements of the results}\label{section10.1m}

We start with the necessary definitions.
Let $A$ be an integral domain of characteristic $0$ which is finitely generated over $\Zz$ and $K$
its quotient field. Suppose $A=\Zz[x_1,\ldots ,x_r]$ and define the ideal
\[
I:=\{ f\in\Zz [X_1\kdots X_r]:\, f(x_1\kdots x_r)=0\}.
\]
Thus, $A$ is isomorphic to $\Zz[X_1,\ldots
,X_r]/I$ and $x_i$ corresponds to the residue class of $X_i$ mod
$I$. Following \cite[\S10.7]{EvertseGyory16}, we say that $A$ is
\textit{given effectively} if a finite set of generators for the
ideal $I$ is given. We call such a set of generators for $I$
an \emph{ideal representation} for $A$.
By a representation for 
an element $y$ of $A$ we mean an polynomial $f\in\Zz [X_1\kdots X_r]$
such that $y=f(x_1\kdots x_r)$ and we say that $y$ is 
\emph{effectively given/computable} if such $f$ is given/can be computed.
We can check whether two polynomials $f,g\in\Zz [X_1\kdots X_r]$
represent the same element of $A$ by checking whether $f-g\in I$, 
using an ideal membership algorithm for $I$ (see e.g., \cite{Aschenbrenner04}). 
By a representation for $y\in K$ we mean a pair $(f,g)$
with $f,g\in\Zz [X_1\kdots X_r]$ and $g\not\in I$ such that
$y=f(x_1\kdots x_r)/g(x_1\kdots x_r)$.
By saying that a polynomial with coefficients in $A$
is given/can be determined effectively we mean that its coefficients are given/can be determined effectively.

In what follows, $A$ is an integral domain finitely generated over $\Zz$,
$K$ its quotient field and $G$ a finite
extension of $K$. Further,
$\delta$ is a non-zero element of $A$ and $n$ an
integer with $n\geq 2$. Consider the equation
\begin{equation*}\tag{\ref{e10.1m}}
\begin{array}{ll}
D(F)=\delta\ \ &\mbox{in monic polynomials $F\in A[X]$ of degree $n$}
\\
&\mbox{having all its zeros in $G$.}
\end{array}
\end{equation*}
Our first result is as follows.

\begin{thm}\label{t10.1.1m}
Let $n$ be an integer $\geq 2$ and
$A$ an integral domain finitely generated over $\Zz$ with quotient field $K$ such that
\begin{equation*}\tag{\ref{e10.1.2m}}
\mbox{$\big(\frac{1}{n}A^+\cap A_K^+\big)/A^+$ is finite.}
\end{equation*}
Further, let $G$ be a finite extension of $K$
and $\delta$ a non-zero element of $A$.
\\[0.1cm]
Then the
set of monic polynomials $F\in A[X]$ with \eqref{e10.1m} is a
union of finitely many $A$-equivalence classes. Moreover,
for any effectively given $n$, $A$, $G$, $\delta$ as above, a
full system of representatives for these equivalence classes can
be determined effectively.
\end{thm}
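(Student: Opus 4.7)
The plan is to reduce Theorem \ref{t10.1.1m} to Theorem A by passing to the integral closure and then refining the equivalence relation. First I would use the effective commutative algebra of Section \ref{section10.3m} to compute an ideal representation for $A_K$, the integral closure of $A$ in $K$; this is still a finitely generated $\Zz$-algebra, and it is integrally closed by construction. Every solution $F\in A[X]$ of \eqref{e10.1m} is \emph{a fortiori} a solution over $A_K$, so Theorem A applied to $(A_K,G,\delta)$ effectively yields a finite list $F_1,\ldots,F_N\in A_K[X]$ of representatives of the $A_K$-equivalence classes of monic degree-$n$ polynomials with discriminant $\delta$ and all zeros in $G$.

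For a fixed $F_i$, the $A$-equivalence classes of solutions lying in the $A_K$-class of $F_i$ correspond bijectively to the set
\[
V_i:=\{a\in A_K:\,F_i(X+a)\in A[X]\}
\]
modulo $A$. Writing $c_1$ for the coefficient of $X^{n-1}$ in $F_i$, the $X^{n-1}$-coefficient of $F_i(X+a)$ equals $c_1+na$, so a necessary condition for $a\in V_i$ is $c_1+na\in A$. If no $a\in A_K$ satisfies this then $V_i=\emptyset$; otherwise, fixing one such $a^*$, every other $a\in A_K$ with $c_1+na\in A$ has the form $a^*+a'$ with $a'\in\bigl(\tfrac{1}{n}A\bigr)\cap A_K$. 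Passing to the quotient by $A$, the image of $V_i$ therefore lies in a translate of $\bigl(\tfrac{1}{n}A\cap A_K\bigr)/A$, which is finite by the hypothesis \eqref{e10.1.2m}. Taking the union over all $i$ gives the finiteness statement.

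For the effective refinement, the essential additional steps are (a) producing a finite system of representatives for $\bigl(\tfrac{1}{n}A\cap A_K\bigr)/A$, and (b) deciding whether $-c_1\in A+nA_K$ and, if so, exhibiting $a^*$. Both reduce to ideal- or submodule-membership problems inside the finitely generated $A$-module $A_K$: via the bijection $a\mapsto na$, problem (a) becomes the computation of representatives of the quotient ideal $(A\cap nA_K)/nA$ of the Noetherian ring $A$, and problem (b) is a single membership test in $A_K/A$. These are exactly the kinds of tasks handled by the tools recalled in Section \ref{section10.3m}. Once the finite candidate set $\{a^*+r_1,\ldots,a^*+r_M\}$ is in hand, one tests each $a$ by computing the coefficients of $F_i(X+a)$ and checking $A$-membership coefficient-by-coefficient.

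The main obstacle is not Diophantine — the entire Diophantine content is absorbed into the invocation of Theorem A — but rather the effective commutative algebra needed to realize $(A\cap nA_K)/nA$ and the membership decision in $A_K/A$, which is potentially infinite as an abelian group. Once those constructive tools from Section \ref{section10.3m} are invoked, the remainder of the proof is a clean additive-group refinement of the $A_K$-classification driven by condition \eqref{e10.1.2m}.
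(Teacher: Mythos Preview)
Your approach is correct and genuinely different from the paper's. The paper does \emph{not} reduce to Theorem~A; instead it works directly from Proposition~\ref{p10.2.2m}: it fixes a tuple of differences $(\gamma_{ij})$ from the finite set~$\irF$, rewrites the constraints as a linear system over $A$ in the unknowns $(\alpha_1,\ldots,\alpha_n,y)$ with $y=\sum\alpha_i$, solves for one particular $(\alpha_{10},\ldots,\alpha_{n0},y_0)$, and then observes that any other solution with the same differences satisfies $\alpha_i-\alpha_{i0}=(y-y_0)/n\in\tfrac{1}{n}A\cap A_K$. Condition~\eqref{e10.1.2m} then enters exactly as in your argument.

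Your route is more modular: you treat Theorem~A as a black box producing the $A_K$-classes and then refine each $A_K$-class to $A$-classes via the inclusion $V_i/A\hookrightarrow\big(\tfrac{1}{n}A\cap A_K\big)/A$ forced by the $X^{n-1}$-coefficient. This is cleaner conceptually and avoids re-running the linear-algebra step over $A_G^n$; the price is that it depends on an external result, and one must compute an ideal representation for $A_K$ before invoking Theorem~A (Proposition~\ref{cor:E0-17} does this). The paper's route is more self-contained and stays inside the proof machinery of Proposition~\ref{p10.2.2m} throughout. Both use~\eqref{e10.1.2m} at the same point and in the same way; they differ only in whether the ``one solution per difference pattern'' is produced by Theorem~A or by solving the linear system~\eqref{e10.5.9m} directly.

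One small remark: your membership test for $-c_1\in A+nA_K$ is indeed effective, since $A+nA_K$ is a finitely generated $A$-submodule of $K$ once generators of $A_K$ are known, so Corollary~\ref{cor:E0-11}\,(ii) applies; and the representatives for $\big(\tfrac{1}{n}A\cap A_K\big)/A$ are supplied directly by Corollary~\ref{cor:E0-18}, so your detour through $(A\cap nA_K)/nA$ is unnecessary though harmless.
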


By Corollary \ref{cor:E0-18} in Section \ref{section10.3m} below,
for any effectively given integral domain $A$ finitely generated over $\Zz$
it can be decided effectively whether it satisfies \eqref{e10.1.2m}.
As said in the introduction,
we do not know whether condition \eqref{e10.1.2m} can be
relaxed. 
Below, we show that Theorem \ref{t10.1.1m} is not true
for arbitrary finitely generated domains of characteristic $0$.

We now turn to elements of orders in finite \'{e}tale algebras.
We start again with some definitions.
Let again $A$ be an integral domain finitely generated over $\Zz$
and $K$ its quotient
field.
A finite \'{e}tale $K$-algebra
$\Omega$ (or in particular a finite field extension of $K$) is given effectively,
if a monic polynomial
$P\in K[X]$ without multiple zeros is given effectively such that 
$\Omega= K[X]/(P)$.
Using \cite[Thm. 10.7.5]{EvertseGyory16} it can be decided effectively whether $P$ is irreducible, and thus,
whether $\Omega$ is a field.
Elements of $\Omega$ can be expressed in the form
$\sum_{i=0}^{n-1}a_i\theta^{i}$ with $a_0,\ldots ,a_{n-1}\in
K$, where $\theta :=X\ {\rm mod}\, P$.
We say that an element of $\Omega$ is given/can be determined effectively
effectively if $a_0,\ldots, a_{n-1}$ are given/can be determined
effectively.

Recall that an $A$-order of $\Omega$ is an
$A$-subalgebra of the integral closure of $A$ in $\Omega$, which
spans $\Omega$ as a $K$-vector space. By a result of Nagata
\cite{Nagata56}, the integral closure of $A$ in
$\Omega$ is finitely generated as an $A$-module. Since the integral domain
$A$ is Noetherian, any $A$-order of $\Omega$ is finitely generated
as an $A$-module as well. We say that an $A$-order $O$ of $\Omega$ is given effectively
if a finite set of $A$-module generators 
$\left\{\omega_1=1,\omega_2,\ldots ,\omega_m\right\}$ of $O$ 
is given effectively.
We say that an element $\alpha$ of $O$ is
given/can be determined effectively, if
$a_1,\ldots ,a_m\in A$ are given/can be determined effectively
such that $\alpha=\sum_{i=1}^{m}a_i\omega_i$.
Using Corollary \ref{cor:E0-19} (i) below one can verify that
$\omega_1,\ldots ,\omega_m$ do indeed generate an $A$-order of
$\Omega$.

Let $\Omega$ be a finite \'{e}tale $K$-algebra with
$[\Omega :K]=:n\geq 2$, let $O$ be an $A$-order in $\Omega$,
and let $\delta$ be a non-zero element of $A$.
We consider the equation
\begin{equation*}\tag{\ref{e10.1.6m}}
D_{\Omega/K}(\alpha)=\delta \,\mbox{ in } \alpha\in O.
\end{equation*}
We prove the following result.

\begin{thm}\label{t10.1.4m}
Let $A$ be an integral domain finitely generated over $\Zz$ with quotient field $K$,
$\Omega$ a finite \'{e}tale $K$-algebra, $O$ an $A$-order in $\Omega$,
and $\delta\in A$, $\delta\not= 0$. Assume that
\begin{equation*}\tag{\ref{e10.1.5m}}
\mbox{$(O\cap K)^+/A^+$ is finite.}
\end{equation*}
Then the set of $\alpha\in O$ with \eqref{e10.1.6m} is a union of
finitely many $A$-equivalence classes.
Further, for any effectively given $A$, $\Omega$, $O$, $\delta$ as above,
a full system of representatives for these classes can be determined
effectively.
\end{thm}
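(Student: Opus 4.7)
My plan is to reduce Theorem~\ref{t10.1.4m} to Theorem~B by enlarging $A$ to its integral closure $\widetilde{A}:=A_K$ in $K$. This $\widetilde{A}$ is again a finitely generated $\Zz$-algebra (by Nagata's theorem cited in the introduction), it is integrally closed by construction, and has the same quotient field $K$. Let $\widetilde{O}:=\widetilde{A}\cdot O$ be the subring of $A_{\Omega}$ generated by $\widetilde{A}$ and $O$. Then $\widetilde{O}$ contains $\widetilde{A}$ and a $K$-basis of $\Omega$ (since $O$ does), so it is an $\widetilde{A}$-order of $\Omega$. Theorem~B, applied to $\widetilde{A}$, $\widetilde{O}$, $\delta$, yields finitely many $\widetilde{A}$-equivalence classes $C_1,\ldots,C_N$ containing all $\alpha\in\widetilde{O}$ with $D_{\Omega/K}(\alpha)=\delta$, together with effectively computable representatives $\alpha_0^{(j)}\in C_j$ in the effective setting.

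The next step is to bound the number of $A$-equivalence classes of solutions in $O$ lying in a single $\widetilde{A}$-class $C_j$. If $\alpha,\alpha'\in O\cap C_j$, then $\alpha-\alpha'\in\widetilde{A}\subseteq K$ and also $\alpha-\alpha'\in O$, hence $\alpha-\alpha'\in O\cap K$. Conversely, any $\beta\in O\cap K$ satisfies $\beta\in\widetilde{A}$ (as $\beta$ is integral over $A$ and lies in $K$), so if $\alpha\in O\cap C_j$ and $\beta\in O\cap K$, then $\alpha+\beta\in O\cap C_j$ and $D_{\Omega/K}(\alpha+\beta)=D_{\Omega/K}(\alpha)=\delta$. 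Consequently, when $O\cap C_j$ is non-empty it is a single coset $\alpha_1^{(j)}+(O\cap K)$, which under hypothesis~\eqref{e10.1.5m} is a union of $[(O\cap K)^+:A^+]<\infty$ cosets of $A^+$. Summing over $j$ yields the qualitative finiteness statement.

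For the effective part I would use Section~\ref{section10.3m} to: (i)~effectively compute an ideal representation and $A$-module generators for $\widetilde{A}$ and $\widetilde{O}$ from those of $A$ and $O$; (ii)~apply the effective form of Theorem~B to produce representatives $\alpha_0^{(1)},\ldots,\alpha_0^{(N)}\in\widetilde{O}$; (iii)~for each $j$, decide whether $(\alpha_0^{(j)}+\widetilde{A})\cap O$ is non-empty and, if so, produce some $\alpha_1^{(j)}$ in that intersection by solving a linear system in the finitely generated $A$-module $\widetilde{O}$; (iv)~compute a finite set of coset representatives of $A^+$ in $(O\cap K)^+$, after which all solutions are exhausted by the translates $\alpha_1^{(j)}+\beta$. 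The main obstacle lies in steps~(iii)--(iv): they require explicit presentations of the \emph{a priori} finite $A$-modules $\widetilde{O}/O$ and $(O\cap K)/A$. This is precisely the point at which hypothesis~\eqref{e10.1.5m} and the effective commutative algebra of Section~\ref{section10.3m}---most notably an effective construction of the integral closure $A_K$ and effective membership tests in finitely generated $A$-submodules of $\Omega$---must be combined.
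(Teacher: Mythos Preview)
Your reduction to Theorem~B via the integral closure $\widetilde{A}=A_K$ is correct and gives a valid proof, but it is a genuinely different route from the paper's. The paper does \emph{not} invoke Theorem~B as a black box; instead it goes back to Proposition~\ref{p10.2.2m} (applied with $A_K$ in place of $A$) to obtain the finite set $\irF'$ of possible differences $\alpha^{(i)}-\alpha^{(j)}$, and then, for each tuple $(\gamma_{ij})$, solves directly a linear system over $A$ for $\alpha\in O$ with $\alpha^{(i)}-\alpha^{(j)}=\gamma_{ij}$. Only at the very end does it translate by representatives of $(O\cap K)^+/A^+$, exactly as you do. Your approach is cleaner conceptually---it makes transparent that Theorem~\ref{t10.1.4m} is Theorem~B plus a finite-index descent---while the paper's approach avoids constructing the auxiliary order $\widetilde{O}$ and searching for $O$-representatives inside $\widetilde{A}$-classes, working in $O$ throughout.

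One correction to your effective sketch: you write that step~(iii) ``requires an explicit presentation of the \emph{a priori} finite $A$-module $\widetilde{O}/O$''. In fact $\widetilde{O}/O$ need not be finite at all (nothing in the hypotheses forces it), and fortunately you do not need it to be. Deciding whether $(\alpha_0^{(j)}+\widetilde{A})\cap O\neq\varnothing$ amounts to testing whether $\alpha_0^{(j)}$ lies in the finitely generated $A$-submodule $O+\widetilde{A}\subset\Omega$, which is a straightforward application of Corollary~\ref{cor:E0-11}\,(ii) after expressing everything in the $K$-basis $\{1,\theta,\ldots,\theta^{n-1}\}$. Step~(iv) is handled exactly by Corollary~\ref{cor:E0-19}\,(ii), as you say. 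With this fix, your argument goes through.
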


Using Corollary \ref{cor:E0-19} (ii) below, for given $A$, $\Omega$, $O$ it
can be decided effectively whether condition \eqref{e10.1.5m} is
satisfied.
At the end of the present section we show that Theorem \ref{t10.1.4m}
becomes false if we relax condition \eqref{e10.1.5m}.

We now show that Theorem \ref{t10.1.1m} cannot be true for arbitrary
finitely generated domains $A$.
More precisely, we show that for
every integer $n\geq 2$, there are an integral domain $A$ finitely generated over $\Zz$,
a finite extension $G$ of the quotient field of $A$, and $\delta\in A\setminus\{ 0\}$,
such that there are infinitely many $A$-equivalence classes
of monic polynomials $F\in A[X]$ of degree $n$ with $D(F)=\delta$ having all their roots in $G$.

Let $n$ be an integer $\geq 2$,
let $t$ be transcendental over $\Qq$ and define the integral domain
\[
A:=\Zz \left[nt,\bnm{n}{2}t^2,\bnm{n}{3}t^3,\ldots, t^n\right].
\]
Notice that $A$ is a subring of $\Zz [t]$ and that
$A$ has quotient field $K:=\Qq (t)$.
We can express elements of $A$ as $\sum_{k=0}^m s_kt^k\in A$ with $s_k\in\Zz$
for all $k$. We show that
$s_k$ is divisible by $n$ if $k$ is coprime with $n$.
Indeed, $s_k$ is a $\Zz$-linear combination
of terms
\begin{equation}\label{e10.5.100m}
\prod_{j=1}^n\bnm{n}{j}^{l_j}\ \ \mbox{with $l_1\kdots l_n\in\Zz_{\geq 0}$,\ \ $l_1+2l_2+\cdots +nl_n=k$.}
\end{equation}
Let $p^r$ be a prime power occurring in the prime factorization
of $n$.
For each term in \eqref{e10.5.100m}, there is $j\in\{ 1,\ldots, n-1\}$
such that $j$ is coprime with $p$ and $l_j>0$,
since $k$ is not divisible by $p$.
From well-known divisibility properties of binomial coefficients,
it follows that $\bnm{n}{j}$ is divisible by $p^r$.
Hence all terms in \eqref{e10.5.100m} are divisible by $p^r$.
Consequently, $s_k$ is divisible by
each of the prime powers $p^r$ in the factorization of $n$,
hence it is divisible by $n$.

Now fix a non-zero $c\in A$, let $\delta$ be the discriminant of $X^n-c$,
and let $G$ be the splitting field of $X^n-c$ over $K$. Consider the
polynomials
\[
F_m:= (X+t^{mn+1})^n-c = \sum_{j=0}^n \bnm{n}{j}t^j\cdot (t^n)^{mj}X^{n-j}\, -c
\]
where $m$ runs through the positive integers.
Clearly, for every $m$ we have $F_m\in A[X]$,
$F_m$ has splitting field $G$ over $K$,
and $D(F_m)=\delta$. We show that the polynomials $F_m$ lie in distinct
 $A$-equivalence classes; it then follows that \eqref{e10.1m} has
infinitely many $A$-equivalence classes of solutions.
Let $m,m'$ be two distinct positive integers. Suppose that $F_m,F_{m'}$ are $A$-equivalent; then there is $a\in A$ such that
$F_{m'}(X)=F_m(X+a)$. It follows that there is an $n$-th root of unity $\rho$
such that $X+t^{m'n+1}=\rho (X+t^{mn+1}+a)$. Consequently,
$\rho =1$ and $t^{m'n+1}-t^{mn+1}\in A$.
But this is impossible, since the exponents of both terms are coprime
with $n$, while the coefficients are not divisible by $n$.

We finish this section by showing that if, with the notation of  
Theorem \ref{t10.1.4m},
the integral domain $A$ and the $A$-order $O$
of $\Omega$ do not satisfy \eqref{e10.1.5m},
then there is a non-zero
$\delta\in A$ such that \eqref{e10.1.6m} has infinitely many $A$-equivalence classes of solutions. Indeed,
suppose \eqref{e10.1.5m} does not hold. Then there is an infinite
sequence $b_1,b_2,\ldots$ of elements of $O\cap K$ such that none
of the differences $b_i-b_j$ ($i>j\geq 1$) belongs to $A$.
Pick
$\alpha\in O$ such that $\Omega =K [\alpha ]$ and put
$\delta :=D_{\Omega /K}(\alpha )$. Then $D_{\Omega/K}(\alpha +b_i)=\delta$
and $\alpha +b_i\in O$
for $i=1,2,\ldots$, and the elements $\alpha +b_i$ ($i=1,2,\ldots$)
lie in different $A$-equivalence classes.

\section{Tools from effective commutative algebra}\label{section10.3m}

For the definitions of a domain, \'{e}tale algebra, order, etc. 
and elements of those being effectively given/computable we refer
to Section \ref{section10.1m}. 
We start with some effective results on systems of linear equations 
in polynomials.

\begin{thmp}\label{th:E0-1}
Let ${\bf k}=\Qq$ or $\Ff_p$ for some prime $p$.
Then for any given positive integer $r$ and any given polynomials
$f_1\kdots f_s\in {\bf k}[X_1\kdots X_r]$ we can:
\\
(i) determine effectively whether a given polynomial $g$ from ${\bf k}
[X_1\kdots X_r]$ belongs to the ideal $I=(f_1\kdots f_s)$ and if
so, determine effectively polynomials $g_1\kdots g_s$ such that
$g=g_1f_1+\cdots +g_sf_s$ (ideal membership problem);
\\
(ii) determine effectively whether $I$ is a prime ideal.
\end{thmp}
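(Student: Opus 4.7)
The plan is to reduce both parts to standard effective computations in polynomial rings over ${\bf k}$, exploiting the fact that for ${\bf k}=\Qq$ or $\Ff_p$ all ring operations---and in particular univariate polynomial factorization---are effective.

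For (i), I would compute a Gr\"obner basis of $I$ with respect to some fixed monomial order by running Buchberger's algorithm on $f_1,\ldots,f_s$. Termination is guaranteed by Dickson's lemma, and every step is an explicit polynomial computation over ${\bf k}$. During the algorithm I would track, for each basis element $g_j'$ produced, an explicit representation $g_j'=\sum_{i=1}^s h_{ji}f_i$; this is possible because each $S$-polynomial reduction expresses the new element as an explicit polynomial combination of previously computed ones. Given $g$, I then reduce it modulo the Gr\"obner basis: if the remainder is nonzero, $g\notin I$, and otherwise the reduction writes $g=\sum_j q_j g_j'$, which combined with the stored representations of the $g_j'$ yields the desired $g=\sum_{i=1}^s g_i f_i$.

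For (ii), deciding primality is considerably deeper. My plan is to invoke an algorithm for primary decomposition, for example the Gianni--Trager--Zacharias algorithm, which over a computable perfect field returns $I=Q_1\cap\cdots\cap Q_m$ together with the associated primes $P_j=\sqrt{Q_j}$; then $I$ is prime iff $m=1$ and $Q_1=P_1$, both of which can be checked using the ideal membership test from (i). An equivalent route is to compute a Noether normalization ${\bf k}[y_1,\ldots,y_d]\hookrightarrow {\bf k}[X_1,\ldots,X_r]/I$ by a generic linear change of variables, pass to the zero-dimensional extension over ${\bf k}(y_1,\ldots,y_d)$, choose a primitive element $\theta$, and test irreducibility of the minimal polynomial of $\theta$ by univariate factorization over ${\bf k}(y_1,\ldots,y_d)$.

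The main obstacle is (ii), and specifically the reliance on effective multivariate primary decomposition (or, equivalently, on effective polynomial factorization over purely transcendental extensions of ${\bf k}$). Since ${\bf k}$ is perfect and univariate factorization is effective over $\Qq$ (for instance by LLL-based methods) and over $\Ff_p$ (by Berlekamp's algorithm), the classical lifting and reduction techniques render each required subroutine effective. The proposition is then essentially a compilation of well-known results from effective commutative algebra, a unified treatment of which---in the setting needed here---can be found in \cite{Aschenbrenner04}.
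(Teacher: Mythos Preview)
Your argument is correct, but it takes a different route from the paper. The paper does not give a self-contained proof at all: it simply refers to Seidenberg \cite{Seidenberg74} (\S4 for (i), \S46 for (ii)), noting that the underlying ideas go back to Hermann \cite{Hermann26}. Seidenberg's treatment is in the classical ``constructive algebra'' style, predating Buchberger's algorithm; for (ii) he in fact produces the full set of associated primes of $I$, from which primality is read off. Your approach instead uses the modern Gr\"obner-basis machinery (Buchberger for (i), Gianni--Trager--Zacharias or Noether normalization plus a primitive element for (ii)), which is more algorithmically transparent and closer to what one would actually implement. Both routes rely on the same essential input---effective arithmetic and univariate factorization over $\Qq$ and $\Ff_p$---so the difference is one of packaging rather than depth.

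One minor point: your closing citation to \cite{Aschenbrenner04} is slightly off. Aschenbrenner's paper is about degree and height bounds for ideal membership over $\Zz[X_1,\ldots,X_r]$ (this is what the paper uses it for, in Proposition~\ref{th:E0-9}); it is not a reference for primary decomposition or primality testing over $\Qq$ or $\Ff_p$. If you want a single source for the algorithms you sketch, a computational commutative algebra text covering Buchberger and GTZ would be more appropriate, or you could simply cite Seidenberg as the paper does.
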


\begin{proof}
See Seidenberg \cite{Seidenberg74}:
\S4, p. 277 for (i) and \S46, p. 293 for (ii) 
(in fact Seidenberg gives a method
to determine the prime ideals associated to a given ideal $I$,
which certainly enables one to decide whether $I$ is a prime
ideal).
The main ideas in the proofs of these results originate from
Hermann \cite{Hermann26} but her arguments contain gaps. 
\end{proof}

For a polynomial $f$ with integer coefficients, we denote by
$H(f)$ its height (maximum of the absolute values of its
coefficients) and by $\Deg f$ its total degree. Further, we define
the polynomial ring $R:=\Zz [X_1\kdots X_r]$.

\begin{thmp}\label{th:E0-9}
Let $M$ be an $m\times n$-matrix with entries from $R$, and ${\bf
b}$ a vector from $R^m$, such that the entries of $M$ and ${\bf
b}$ have total degrees at most $d$ and heights at most $H$.
\\[0.1cm]
(i) The $R$-module
\[
\{ {\bf x}\in R^n:\, M{\bf x}={\bf 0}\}
\]
is generated by vectors, of which the coordinates are polynomials,
whose total degrees  are bounded above by an effectively
computable number $C_1$ depending only on $m,n,d,r$ and whose
heights are bounded above by an effectively computable number
$C_2$ depending only on $m,n,d,r$ and $H$.
\\[0.1cm]
(ii) Suppose that the system
\[
M{\bf x}={\bf b}
\]
is solvable in ${\bf x}\in R^n$. Then this system has a solution
${\bf x}_0\in R^n$ whose coordinates have total degrees bounded
above by $C_3$ and heights bounded above by $C_4$, where both
$C_3,C_4$ are effectively computable numbers depending only on
$m,n,d,r$ and $H$.
\end{thmp}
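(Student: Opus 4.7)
The plan is to reduce both parts to a finite-dimensional linear algebra problem over $\Zz$. The first, and main, step is to fix a degree bound $D=D(m,n,d,r)$ such that (a) the kernel of $M$ viewed over $\Qq[X_1,\ldots,X_r]$ admits a generating set of total degree $\leq D$, and (b) whenever $M\mathbf{x}=\mathbf{b}$ is solvable in $R^n$ it already has a solution of total degree $\leq D$. Such a $D$, of doubly-exponential size in $r$, is produced by the effective Hermann--Seidenberg theory of polynomial ideals (see \cite{Hermann26}, \cite{Seidenberg74}); this is the input that makes part (i) non-trivial, since one needs $R$-module generators, not merely a $\Qq$-basis of the kernel tensored up to $\Qq(X_1,\ldots,X_r)$.

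With $D$ fixed, I would expand each component $x_i$ of $\mathbf{x}$ as a generic polynomial $\sum_{|\alpha|\leq D} u_{i,\alpha} X^{\alpha}$ in new integer unknowns $u_{i,\alpha}$. Substituting into $M\mathbf{x}=\mathbf{0}$ or $M\mathbf{x}=\mathbf{b}$ and equating coefficients of each monomial $X^{\beta}$ turns the problem into a finite linear system
\[
\widetilde{M}\mathbf{u}=\widetilde{\mathbf{b}}
\]
over $\Zz$, whose number of rows and columns and whose entry heights are effectively bounded in terms of $m,n,d,r$ and $H$ alone.

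For part (i), the kernel of $\widetilde{M}$ over $\Zz$ is a finitely generated free $\Zz$-module, and Smith normal form (or the Bombieri--Vaaler form of Siegel's lemma) provides a $\Zz$-basis of it whose coordinates are bounded explicitly in terms of the size and entry-heights of $\widetilde{M}$. Repackaging these coordinates as the coefficients of polynomials of total degree $\leq D$ yields a generating set for the $R$-module $\{\mathbf{x}\in R^n:M\mathbf{x}=\mathbf{0}\}$ with the claimed bounds, which simultaneously gives $C_1$ and $C_2$. For part (ii), one extracts from $\widetilde{M}\mathbf{u}=\widetilde{\mathbf{b}}$ a maximal nonsingular square subsystem and applies Cramer's rule to produce a $\Qq$-solution whose height is effectively bounded via Hadamard's inequality; clearing the common denominator and then correcting by an element from the kernel bound supplied by part (i) produces an honest $\Zz$-solution and hence an $\mathbf{x}_0\in R^n$ satisfying the required bounds $C_3,C_4$.

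The principal obstacle is the degree bound $D$: effective ideal membership and effective syzygy computation in $\Qq[X_1,\ldots,X_r]$ are known to be doubly exponential in $r$, so one cannot hope for a bound better than this, and one must be careful to invoke the \emph{syzygy} form of the Hermann--Seidenberg theorem rather than only the ideal-membership form, in order to get module generators and not just single solutions. Once $D$ is granted, the passage from polynomials over $\Zz$ to the integer linear system $\widetilde{M}\mathbf{u}=\widetilde{\mathbf{b}}$ and the subsequent Siegel-type height estimates are routine, and all steps are algorithmic, which yields the effective computability of $C_1,\ldots,C_4$.
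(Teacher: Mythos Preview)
There is a genuine gap in part~(ii). You claim a degree bound $D=D(m,n,d,r)$, independent of the height $H$, such that whenever $M\mathbf{x}=\mathbf{b}$ is solvable in $R^n=\Zz[X_1,\ldots,X_r]^n$ it has a solution of degree $\leq D$. No such bound exists. Take $r=1$, $\mathbf{b}=1$, and $M=(1-2X,\ 2^N)$; then $1=(1-2X)\sum_{i=0}^{N-1}(2X)^i + 2^N X^N$ shows the system is solvable in $\Zz[X]^2$, but substituting $X=1/2$ one checks that any representation $1=g_1(1-2X)+g_2\cdot 2^N$ with $g_1,g_2\in\Zz[X]$ forces $\deg g_2\geq N$. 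The data have degree $\leq 1$ and height $2^N$, so the needed degree grows like $\log H$. Hermann--Seidenberg over $\Qq$ only produces a $\Qq[X]$-solution of small degree (here $g_1=0$, $g_2=2^{-N}$), and your ``correct by an element of the kernel'' step cannot repair this: the finite integer system $\widetilde{M}\mathbf{u}=\widetilde{\mathbf{b}}$ you set up, which encodes ``$M\mathbf{x}=\mathbf{b}$ with $\deg\mathbf{x}\leq D$'', is simply unsolvable over $\Zz$ even though it is solvable over $\Qq$. This height-dependence of the degree bound is exactly the content of Aschenbrenner's Theorem~6.1 in \cite{Aschenbrenner04}, which the paper invokes for $C_3$; the passage from $\Qq[X_1,\ldots,X_r]$ to $\Zz[X_1,\ldots,X_r]$ is not the routine step you suggest.

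For part~(i) your outline is closer to the mark (indeed Aschenbrenner's $C_1$ is independent of $H$), but the crucial implication is left unjustified: from the fact that the $\Qq[X_1,\ldots,X_r]$-syzygy module is generated in degree $\leq D$ you only get that $S/\bigl(R\text{-span of }S_D\bigr)$ is $\Zz$-torsion, not that it vanishes. Establishing that the integer syzygies of degree $\leq D$ actually generate the $R$-module $S$ is again work over $\Zz$ that Hermann--Seidenberg does not do for you; the paper defers this to Aschenbrenner's Proposition~5.2. In short, the paper's proof is a citation of \cite{Aschenbrenner04} precisely because the reduction you propose from $\Zz[X_1,\ldots,X_r]$ to $\Qq[X_1,\ldots,X_r]$ plus integer linear algebra does not go through without substantial additional argument.
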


\begin{proof}
Aschenbrenner \cite{Aschenbrenner04} proved the above with
$C_1=(2md)^{2^{c_1r\log 2r}}$,
$C_2=\exp\Big((2m(d+1))^{2^{c_2(1+r\log 2r)}}(1+\log H)\Big)$ (cf. his
Proposition 5.2) and $C_3=(2md)^{2^{c_3r\log 2r}}(1+\log H)$ (cf. his
Theorem 6.1), where $c_1,c_2,c_3$ are effectively computable
absolute constants. In (ii), thanks to our upper bound for the
total degrees,
the problem to find a solution to $M{\bf x}={\bf b}$
reduces to  solving a finite system of inhomogeneous linear
equations over $\Zz$, and then we obtain a value for $C_4$ for
instance by invoking for instance a result from 
\cite{Borosh89}.
\end{proof}

\begin{cor}[Ideal membership over $\Zz$]\label{cor:E0-10}
Let $I=(f_1\kdots f_s)$ be an ideal in $R$ and $g\in R$. Suppose
that $f_1\kdots f_s$ and $g$ have total degrees at most $d$ and
heights at most $H$. If $g\in I$, there exist $g_1\kdots g_s\in R$
of total degrees and heights bounded above by effectively
computable numbers depending only on $r$, $d$ and $H$, such that
$g=\sum_{i=1}^s g_if_i$.
\end{cor}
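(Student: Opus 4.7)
The plan is to observe that Corollary~\ref{cor:E0-10} is an immediate specialization of Proposition~\ref{th:E0-9}(ii). Given $I = (f_1, \ldots, f_s) \subseteq R = \Zz[X_1, \ldots, X_r]$ and $g \in I$, producing an expression $g = g_1 f_1 + \cdots + g_s f_s$ with $g_i \in R$ is precisely the problem of solving a linear system $M\mathbf{x} = \mathbf{b}$ over $R$, where $M = (f_1, \ldots, f_s)$ is viewed as a $1 \times s$ matrix with entries in $R$, and $\mathbf{b} = g$ is viewed as a vector in $R^1$.

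The entries of $M$ and of $\mathbf{b}$ have total degree at most $d$ and height at most $H$ by hypothesis, and the system is solvable in $R^s$ precisely because $g \in I$. Applying Proposition~\ref{th:E0-9}(ii) with $m = 1$ and $n = s$ therefore yields a solution $(g_1, \ldots, g_s) \in R^s$ whose coordinates have total degree bounded by $C_3$ and height bounded by $C_4$. Here $C_3$ and $C_4$ are effectively computable in terms of $m, n, d, r, H$, and hence, since $m = 1$ and $n = s$, in terms of $r$, $d$, $H$ (and implicitly $s$, which is part of the data describing the ideal $I$). This yields the required representation with the claimed effective bounds.

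There is no substantive obstacle here: the entire content of the corollary is already packaged inside Proposition~\ref{th:E0-9}(ii); the corollary simply translates the ideal membership question into the matrix language of that proposition. If one wishes to remove the implicit dependence on $s$ from the bounds, one may note that $f_1, \ldots, f_s$ all lie in the $\Qq$-vector space of polynomials in $r$ variables of degree at most $d$, whose dimension $\binom{r+d}{r}$ depends only on $r$ and $d$; after discarding $R$-module-redundant generators (and rewriting the discarded $f_i$ in terms of the surviving ones using Proposition~\ref{th:E0-9}(i) to control heights and degrees), one reduces to a generating set of size bounded in terms of $r$ and $d$ alone. This tidying step is not necessary for the statement, but it explains why the bounds may legitimately be presented as functions of $r$, $d$, $H$ only.
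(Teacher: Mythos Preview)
Your proposal is correct and matches the paper's own proof exactly: apply part (ii) of Proposition~\ref{th:E0-9} with $m=1$. Your additional remarks on the dependence on $s$ go beyond what the paper says, but the core argument is identical.
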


\begin{proof}
Apply part (ii) of Theorem \ref{th:E0-9} with $m=1$.
\end{proof}

In what follows, $A$ is an integral domain with quotient field $K$ of characteristic $0$ that is finitely generated over $\Zz$. We assume that
$A$ is effectively given, i.e., we have $A=\Zz [x_1\kdots x_r]$,
and we are given a finite set of generators $f_1\kdots f_s$ of the ideal
\[
I=\{ f\in\Zz [X_1\kdots X_r]:\, f(z_1\kdots f_r)=0\}.
\]

\begin{cor}\label{cor:E0-11}
Given an $m\times n$-matrix $M$ with entries in $K$ and ${\bf b}\in K^m$ one 
can:
\\[0.15cm]
(i) effectively determine a finite set of $A$-module generators
${\bf a}_1\kdots {\bf a}_t$ for the $A$-module of ${\bf x}\in A^n$
with $M{\bf x}={\bf 0}$;
\\ [0.15cm]
(ii) decide effectively whether $M{\bf x}={\bf b}$ has a solution 
${\bf x}\in A^n$ and if so, find a solution.
\end{cor}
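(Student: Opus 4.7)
The plan is to reduce both assertions to Theorem~\ref{th:E0-9} by lifting everything to the polynomial ring $R=\Zz[X_1,\ldots,X_r]$ and encoding the condition ``entry lies in $I$'' by means of auxiliary polynomial unknowns. Recall that $A=R/I$ with $I=(f_1,\ldots,f_s)$ given by generators. First I would clear denominators: each entry of $M$ and of ${\bf b}$ is presented as a fraction $p/q$ with $p,q\in R$ and $q\notin I$ (the condition $q\notin I$ is verifiable by the ideal membership algorithm of Corollary~\ref{cor:E0-10}). Multiplying the system through by a non-zero common denominator $d\in A$ and using that $A$ is a domain, I may assume $M$ has entries in $A$ and ${\bf b}\in A^m$, and then choose lifts $\tilde M\in R^{m\times n}$ and $\tilde{\bf b}\in R^m$.

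Next, I would form the augmented $m\times(n+ms)$ matrix over $R$
\[
N \ :=\ \bigl[\,\tilde M\ \big|\ -f_1 I_m\ \big|\ -f_2 I_m\ \big|\ \cdots\ \big|\ -f_s I_m\,\bigr],
\]
where $I_m$ denotes the $m\times m$ identity matrix. The key observation is that a vector ${\bf x}\in A^n$ satisfies $M{\bf x}={\bf b}$ if and only if some lift $\tilde{\bf x}\in R^n$, together with some ${\bf z}_1,\ldots,{\bf z}_s\in R^m$, satisfies
\[
N\cdot(\tilde{\bf x},{\bf z}_1,\ldots,{\bf z}_s)^T=\tilde{\bf b},
\]
because the $i$-th coordinate of $\tilde M\tilde{\bf x}-\tilde{\bf b}$ lies in $I$ precisely when it can be written as $\sum_{k=1}^s f_k z_{i,k}$ with $z_{i,k}\in R$.

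For part (i), I would take $\tilde{\bf b}={\bf 0}$ and apply Theorem~\ref{th:E0-9}(i) to produce effectively finitely many generators ${\bf v}_1,\ldots,{\bf v}_t\in R^{n+ms}$ of the $R$-module $\ker N$. Projecting each ${\bf v}_j$ onto its first $n$ coordinates and reducing modulo $I$ yields ${\bf u}_1,\ldots,{\bf u}_t\in A^n$, each a solution of $M{\bf x}={\bf 0}$. Conversely, any ${\bf x}\in A^n$ with $M{\bf x}={\bf 0}$ lifts, together with suitable ${\bf z}_k\in R^m$, to an element of $\ker N$, hence is an $R$-linear combination of the ${\bf v}_j$; reducing the coefficients mod $I$ exhibits ${\bf x}$ as an $A$-linear combination of ${\bf u}_1,\ldots,{\bf u}_t$. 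For part (ii), I would apply Theorem~\ref{th:E0-9}(ii) to $N$ with right-hand side $\tilde{\bf b}$: this decides whether the augmented system is solvable in $R^{n+ms}$ and, when a solution exists, produces one. Projecting onto the first $n$ coordinates and reducing mod $I$ then gives a solution in $A^n$; the equivalence above guarantees that no $A^n$-solution is overlooked if the $R^{n+ms}$-system has none.

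There is no deep obstacle beyond correctly setting up the augmented system: both parts then follow mechanically from Theorem~\ref{th:E0-9}, and the ancillary tasks — checking $q\notin I$, choosing lifts of elements of $A$ to $R$, projecting onto coordinates, and reducing mod $I$ — are routine given the ideal representation of $A$ and effective ideal membership (Corollary~\ref{cor:E0-10}).
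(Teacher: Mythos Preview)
Your proposal is correct and follows essentially the same route as the paper: clear denominators, lift to $R=\Zz[X_1,\ldots,X_r]$, introduce auxiliary unknowns to express the condition ``each coordinate of $\tilde M\tilde{\bf x}-\tilde{\bf b}$ lies in $I=(f_1,\ldots,f_s)$'', and then invoke Proposition~\ref{th:E0-9}. The paper writes the resulting system as $m_{i1}y_1+\cdots+m_{in}y_n=f_1y_{i1}+\cdots+f_sy_{is}$ ($i=1,\ldots,m$), which is exactly your augmented system $N(\tilde{\bf x},{\bf z}_1,\ldots,{\bf z}_s)^T=\tilde{\bf b}$ in block-matrix notation; your argument is simply a more explicit version of the same reduction.
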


\begin{proof}
(i) After clearing denominators, one may assume that the entries of $M$
and the coordinates of ${\bf b}$ lie in $A$.
Let $m_{ij}\in R$ ($i=1\kdots m$, $j=1\kdots n$ be representatives for the
elements of $M$. Writing $y_1\kdots y_n$ for representatives in $R$ for the 
coordinates of ${\bf x}$ we can rewrite the system $M{\bf x}={\bf 0}$ as
\[
m_{i1}y_1+\cdots +m_{in}y_n= f_1y_{i1}+\cdots +f_sy_{is}\ \ (i=1\kdots m)
\]
in $y_i,y_{ij}\in R$, which is a system of equations
as in part (i) of Proposition \ref{th:E0-9}.
Likewise $M{\bf x}={\bf b}$ can be rewritten as a system of equations
as in part (ii) of Proposition \ref{th:E0-9}. Now one simply has to apply
Proposition \ref{th:E0-9} to these systems.
\end{proof}

We say that a finitely generated $A$-module $\mathcal{M}\subset K$
is effectively given if a finite set of $A$-module generators for $\mathcal{M}$
is effectively given.
We denote the $A$-module generated by $a_1\kdots a_u$ by
$(a_1\kdots a_u)$.

\begin{cor}\label{cor:E0-12}
For any two effectively given $A$-submodules $\mathcal{M}_1$, $\mathcal{M}_2$
of $K$, one can
\\[0.15cm]
(i) effectively decide whether $\mathcal{M}_1\subseteq\mathcal{M}_2$;
\\[0.15cm]
(ii) effectively compute a finite set of $A$-module generators for
$\mathcal{M}_1\cap\mathcal{M}_2$.
\end{cor}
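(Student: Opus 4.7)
The plan is to reduce both assertions directly to \corref{cor:E0-11}. Write $\mathcal{M}_1=(a_1\kdots a_u)$ and $\mathcal{M}_2=(b_1\kdots b_v)$, where the $a_i,b_j\in K$ are given effectively.

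For part (i), I would observe that $\mathcal{M}_1\subseteq \mathcal{M}_2$ holds if and only if each generator $a_i$ of $\mathcal{M}_1$ lies in $\mathcal{M}_2$; equivalently, for every $i\in\{1\kdots u\}$ the single linear equation
\[
b_1y_1+\cdots +b_vy_v=a_i
\]
is solvable in $(y_1\kdots y_v)\in A^v$. Applying part (ii) of \corref{cor:E0-11} to the $1\times v$ matrix $M=(b_1\kdots b_v)$ and right-hand side $a_i$, each such solvability question can be decided effectively, so the inclusion $\mathcal{M}_1\subseteq\mathcal{M}_2$ can be tested effectively.

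For part (ii), I would note that an element $x\in K$ lies in $\mathcal{M}_1\cap \mathcal{M}_2$ precisely when there exist $c_1\kdots c_u,d_1\kdots d_v\in A$ with
\[
x=c_1a_1+\cdots +c_ua_u=d_1b_1+\cdots +d_vb_v,
\]
i.e.\ when the vector $(c_1\kdots c_u,d_1\kdots d_v)\in A^{u+v}$ lies in the kernel of the $1\times(u+v)$ matrix
\[
M=(a_1\kdots a_u,-b_1\kdots -b_v).
\]
By part (i) of \corref{cor:E0-11} I can effectively compute a finite set of $A$-module generators $\mathbf{w}^{(1)}\kdots\mathbf{w}^{(t)}\in A^{u+v}$ for this kernel. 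Writing $\mathbf{w}^{(k)}=(c_1^{(k)}\kdots c_u^{(k)},d_1^{(k)}\kdots d_v^{(k)})$, I claim that the elements
\[
g_k:=c_1^{(k)}a_1+\cdots +c_u^{(k)}a_u\quad (k=1\kdots t)
\]
form a finite set of $A$-module generators of $\mathcal{M}_1\cap\mathcal{M}_2$. Each $g_k$ manifestly belongs to $\mathcal{M}_1\cap\mathcal{M}_2$; conversely, any $x\in\mathcal{M}_1\cap\mathcal{M}_2$ arises from some $(\mathbf{c},\mathbf{d})\in A^{u+v}$ in the kernel of $M$, and writing $(\mathbf{c},\mathbf{d})=\sum_k\alpha_k\mathbf{w}^{(k)}$ with $\alpha_k\in A$ and reading off the first $u$ coordinates yields $x=\sum_k\alpha_k g_k$.

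Both steps are short applications of \corref{cor:E0-11}; the only mild point is the recognition that intersection of two $A$-submodules of $K$ is governed by the syzygies of the combined list of generators, and that the first $u$ (equivalently, the last $v$) coordinates of these syzygies produce generators for the intersection. There is no real obstacle, since \corref{cor:E0-11} already encapsulates the effective work of solving homogeneous and inhomogeneous linear systems over $A$.
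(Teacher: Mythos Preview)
Your proof is correct and follows essentially the same approach as the paper: both parts are reduced to \corref{cor:E0-11}, with (i) handled by testing membership of each generator $a_i$ in $\mathcal{M}_2$ via an inhomogeneous system, and (ii) handled by computing $A$-module generators for the syzygy module of the combined list $(a_1\kdots a_u,b_1\kdots b_v)$ and projecting onto the first block. Your write-up is in fact slightly more explicit than the paper's, since you spell out that one must form the combinations $\sum_i c_i^{(k)}a_i$ rather than merely ``take the coordinates $x_1\kdots x_u$''.
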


\begin{proof}
Let $\mathcal{M}_1=(a_1\kdots a_u)$, $\mathcal{M}_2=(b_1\kdots b_v)$
with the $a_i,b_j\in K$ effectively given.
Then (i) comes down to checking
whether $a_1\kdots a_u\in\mathcal{M}_2$, which is a special case
of part (ii) of Corollary \ref{cor:E0-11}. To determine a finite set of
$A$-module generators for $\mathcal{M}_1\cap\mathcal{M}_2$, one first
determines a finite set of $A$-module generators for the solution set
of $(x_1\kdots x_u,y_1\kdots y_v)\in A^{u+v}$ of
$\sum_{i=1}^u x_ia_i=\sum_{j=1}^v y_jb_j$ and then for each generator one
takes the coordinates $x_1\kdots x_u$. 
\end{proof}

Probably the following results is well-known but we could not find a
proof for it.

\begin{thmp}\label{th:E0-16}
Assume that $A$ is effectively given and let 
$\mathcal{M}_1,\, \mathcal{M}_2$ be two effectively given finitely generated
$A$-submodules of $K$ with $\mathcal{M}_1\subset\mathcal{M}_2$. 
Then it can be decided effectively whether
$\mathcal{M}_2/\mathcal{M}_1$ is finite.
If this is the case, a full system of representatives for $\mathcal{M}_2/\M_1$
can be determined effectively.
\end{thmp}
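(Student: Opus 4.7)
The plan is to reduce the finiteness question for $\M_2/\M_1$ to finiteness of the quotient ring $A/\mathfrak{a}$, where $\mathfrak{a}$ is the annihilator of $\M_2/\M_1$ in $A$, and then to decide finiteness of $A/\mathfrak{a}$ by reducing modulo integer primes and appealing to elimination in positive characteristic. First, multiplying $\M_1,\M_2$ by a nonzero $d\in A$ that clears the denominators of the given generators of $\M_2$ reduces us to the case $\M_1\subseteq\M_2\subseteq A$, so that both modules are ideals of $A$; this does not change the quotient. Writing $\M_2=(b_1,\ldots,b_v)$, the annihilator
\[
\mathfrak{a}\,=\,\{a\in A:a\M_2\subseteq\M_1\}\,=\,\bigcap_{j=1}^{v}\{a\in A:ab_j\in\M_1\}
\]
is effectively computable: each colon $\{a\in A:ab_j\in\M_1\}$ is the first-coordinate projection of the $A$-module of solutions of a single linear equation (Corollary \ref{cor:E0-11}(i)), and the intersection is then handled by Corollary \ref{cor:E0-12}(ii).

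The key observation is that $\M_2/\M_1$ is finite if and only if $A/\mathfrak{a}$ is finite: if $A/\mathfrak{a}$ is finite then $\M_2/\M_1$, being a finitely generated $(A/\mathfrak{a})$-module, is finite; conversely, $A/\mathfrak{a}$ acts faithfully on $\M_2/\M_1$, giving an embedding $A/\mathfrak{a}\hookrightarrow\mathrm{End}_{\Zz}(\M_2/\M_1)$, which is finite when $\M_2/\M_1$ is. So we are reduced to deciding finiteness of $A/\mathfrak{a}$. Writing $A=R/I$ with $R=\Zz[X_1,\ldots,X_r]$ and $J\supseteq I$ for the preimage of $\mathfrak{a}$, the ring $R/J=A/\mathfrak{a}$ is finite precisely when (a) $1\in J\cdot\Qq[X_1,\ldots,X_r]$, and (b) for every prime $p$ dividing the resulting integer $N\in\Zz\cap J$, the $\Ff_p$-algebra $\Ff_p[X_1,\ldots,X_r]/\bar J_p$ is $0$-dimensional, equivalently $\bar J_p\cap\Ff_p[X_i]\neq 0$ for each $i=1,\ldots,r$. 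The reduction to (a)+(b) uses CRT along the prime factorization of $N$, combined with the successive-quotients argument that a finitely generated $\Zz/p^e$-algebra is finite iff its reduction modulo $p$ is finite, plus the Nullstellensatz ensuring that residue fields at closed points of finitely generated $\Zz$-algebras are automatically finite. Condition (a) is decidable by Proposition \ref{th:E0-1}(i), with $N$ read off by clearing denominators in any membership certificate; condition (b), for each of the finitely many relevant $p$, is decidable by standard elimination in $\Ff_p[X_1,\ldots,X_r]$, available through Proposition \ref{th:E0-1}.

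Finally, assuming $A/\mathfrak{a}$ has been shown to be finite, to exhibit a full system of representatives of $\M_2/\M_1$ I would first, for each $i=1,\ldots,r$, find a monic $q_i\in\Zz[X_i]$ with $q_i(X_i)\in J$ by enumerating monic polynomials of increasing degree and testing membership via Corollary \ref{cor:E0-10}; the search terminates because $x_i\bmod\mathfrak{a}$ is integral over $\Zz$. With $d_i=\deg q_i$, the monomials $x_1^{e_1}\cdots x_r^{e_r}$ ($0\le e_i<d_i$) span $A/\mathfrak{a}$ as an abelian group of exponent dividing $N$, realizing $A/\mathfrak{a}$ as an explicit quotient of $(\Zz/N)^D$ with $D=\prod_i d_i$; the kernel of this surjection is a sublattice computable by Proposition \ref{th:E0-9}, and Smith normal form yields a finite list $c_1,\ldots,c_k$ of coset representatives for $A/\mathfrak{a}$. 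The set $\{\sum_{j=1}^v c_{i_j}b_j:(i_1,\ldots,i_v)\in\{1,\ldots,k\}^v\}$ then surjects onto $\M_2/\M_1$, and $A$-equivalence of two candidates is tested via Corollary \ref{cor:E0-11}(ii), from which one extracts exactly one representative per class. The main obstacle I expect is condition (b): namely, packaging the classical fact that Artinianness of a finitely generated $\Ff_p$-algebra is decidable into a clean algorithm invoking only the effective tools collected in this section, and verifying that the CRT-and-filtration passage to positive characteristic remains uniformly effective.
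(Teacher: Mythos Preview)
Your argument is correct and follows a route that is structurally similar to the paper's, but with one genuinely cleaner idea. The paper first invokes Lemma~\ref{lem:E0-17} to reduce to the case where $\M_2$ is obtained from $\M_1$ by adjoining a single generator, so that $\M_2/\M_1\cong A/J$ for a colon ideal $J$; after lifting to an ideal $I\subset\Zz[X_1,\dots,X_r]$ and computing a generator $a$ of $I\cap\Zz$, it applies Lemma~\ref{lem:E0-17} a \emph{second} time to filter along the prime factors of $a$, and finally decides finiteness of $\Ff_p[X_1,\dots,X_r]/\bar I$ by computing the Hilbert polynomial via Gr\"obner bases. Your annihilator observation---that $\M_2/\M_1$ is finite iff $A/\mathfrak{a}$ is, because $A/\mathfrak{a}$ embeds in $\mathrm{End}_{\Zz}(\M_2/\M_1)$---collapses the first inductive reduction to a single step, which is a nice simplification. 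For the residual finiteness test over $\Ff_p$, your elimination criterion $\bar J_p\cap\Ff_p[X_i]\neq 0$ for all $i$ is equivalent to the paper's Hilbert-polynomial criterion; both require Gr\"obner-type computations not literally contained in Proposition~\ref{th:E0-1}, and the paper likewise steps outside to cite Eisenbud at exactly this point, so your acknowledged ``obstacle'' is no worse than what the paper itself faces. One minor quibble: invoking Proposition~\ref{th:E0-9} to compute the kernel of $(\Zz/N)^D\twoheadrightarrow A/\mathfrak{a}$ is a slight mismatch of categories (that proposition concerns $R$-linear systems, and here you want $\Zz$-linear relations), but since the domain $(\Zz/N)^D$ is finite you can simply test each element by ideal membership via Corollary~\ref{cor:E0-10}, so the step goes through.
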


We use the following simple lemma.

\begin{lem}\label{lem:E0-17}
Suppose we are given a sequence
$\N_1\subseteq\cdots \subseteq\N_r$ of finitely generated $A$-modules
contained in $K$.
Then $\N_r/\N_1$ is finite if and only if for $i=1\kdots r-1$,
the quotient $\N_{i+1}/\N_i$ is finite.
Further, if this is the case, we obtain a full system of representatives
for $\N_r/\N_1$ by taking all sums $a_1+\cdots +a_{r-1}$
where $a_i$ runs through
a full system of representatives for $\N_{i+1}/\N_i$ for $i=1\kdots r-1$.
\end{lem}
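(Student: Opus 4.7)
The plan is to induct on $r$, with the cases $r=1,2$ being trivial. The statement is purely about abelian groups (the $\N_i$ are additive subgroups of $K^+$), so no ring-theoretic input about $A$ is really needed; the key tool is the third isomorphism theorem applied to the exact sequence
\[
0 \longrightarrow \N_{r-1}/\N_1 \longrightarrow \N_r/\N_1 \longrightarrow \N_r/\N_{r-1} \longrightarrow 0.
\]

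For the equivalence of finiteness conditions, I would handle the two directions separately. In the forward direction, for each $i$ the natural surjection $\N_r/\N_1 \twoheadrightarrow \N_r/\N_i$ shows that $\N_r/\N_i$ is finite, and since $\N_{i+1}/\N_i$ embeds into $\N_r/\N_i$, it is finite as well. For the converse, the inductive hypothesis applied to $\N_1 \subseteq \cdots \subseteq \N_{r-1}$ gives finiteness of $\N_{r-1}/\N_1$; combining with the assumed finiteness of $\N_r/\N_{r-1}$, the displayed short exact sequence yields $|\N_r/\N_1| = |\N_{r-1}/\N_1|\cdot|\N_r/\N_{r-1}| < \infty$.

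For the system of representatives, I would argue as follows. Assume by induction that each class of $\N_{r-1}/\N_1$ is represented uniquely by a sum $a_1+\cdots+a_{r-2}$ with $a_i$ ranging over the given system for $\N_{i+1}/\N_i$. Given an arbitrary $b \in \N_r$, first pick $a_{r-1}$ from the chosen representatives for $\N_r/\N_{r-1}$ so that $b-a_{r-1}\in\N_{r-1}$, then apply the induction hypothesis to $b-a_{r-1}$ to produce $a_1,\dots,a_{r-2}$ with $b - \sum_{i=1}^{r-1} a_i \in \N_1$. This shows every class of $\N_r/\N_1$ is hit. The total number of such sums is $\prod_{i=1}^{r-1}|\N_{i+1}/\N_i|$, which matches $|\N_r/\N_1|$ by the cardinality identity above, so the representation is in fact unique and the sums form a bona fide full system of representatives.

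There is no real obstacle here; the only point requiring a moment's care is to observe that the sum map $\prod_i(\N_{i+1}/\N_i) \to \N_r/\N_1$, $(a_1,\dots,a_{r-1})\mapsto \sum a_i \bmod \N_1$, is well-defined (each $a_i$ lies in $\N_{i+1}\subseteq\N_r$, so the sum lies in $\N_r$, and changing each $a_i$ by an element of $\N_i\subseteq\N_{r-1}\subseteq\N_r$ changes the sum by an element of $\N_1+\N_2+\cdots+\N_{r-1}=\N_{r-1}$, which is not quite $\N_1$; this forces one to invoke the inductive construction rather than a direct product decomposition argument). The comparison of cardinalities is what upgrades the surjectivity obtained from the inductive procedure to uniqueness of representatives.
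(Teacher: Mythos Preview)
Your argument is correct and is exactly the standard unwinding of this fact; the paper itself gives no details beyond the single word ``Obvious,'' so your write-up is simply a fleshed-out version of what the authors had in mind.
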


\begin{proof}
Obvious.
\end{proof}

\begin{proof}[Proof of Proposition \ref{th:E0-16}]
We may assume that $A$ is given in the form
\[ 
\Zz [X_1\kdots X_r]/(f_1\kdots f_s),
\]
with given polynomials $f_1\kdots f_s\in\Zz [X_1\kdots X_r]$, and that $x_i$
is the residue class of $X_i$ modulo $(f_1\kdots f_s)$, for $i=1\kdots r$.
Then the elements of $K$ may be represented as quotients
$g(x_1\kdots x_r)/h(x_1\kdots x_r)$,
where $g,h\in\Zz [X_1\kdots X_r]$ and $h\not\in (f_1\kdots f_s)$.
After multiplying the given generators of $\M_1$ and $\M_2$ with the product
of their denominators, we may assume that
$\M_1,\, \M_2\subseteq A$.
There is clearly no loss of generality to assume that
$\M_1$, $\M_2$ are given as
$\M_1=(a_1\kdots a_u)$, $\M_2=(a_1\kdots a_v)$ with $v>u$.
In fact, it suffices to prove our Theorem in the special case $v=u+1$.
Then the general case with arbitrary $v$ can be deduced from
Lemma \ref{lem:E0-17}.

So we assume henceforth that $v=u+1$. Let
\[
J:=\{ x\in A:\, x\cdot a_{u+1}\in \M_1\}= A\cap a_{u+1}^{-1}\M_1;
\]
then $\M_2/\M_1$ is isomorphic to the additive group of $A/J$.
By Corollary \ref{cor:E0-12} we can compute a finite set of generators for $J$,
which we may represent as residue classes modulo $(f_1\kdots f_s)$ of
polynomials $f_{s+1}\kdots f_t$ from $\Zz [X_1\kdots X_r]$.
Then $\M_2/\M_1\cong \Zz [X_1\kdots X_r]/I$, where
$I=(f_1\kdots f_s\kdots f_t)$.
So it suffices to prove that it can be decided effectively whether
$\Zz [X_1\kdots X_r]/I$ is finite and that in this case a full system
of representatives can be computed effectively.

A necessary condition for $\Zz [X_1\kdots X_r]/I$ to be finite is that
$I\cap\Zz \not= (0)$. This in turn is equivalent
to the existence of
$g_1\kdots g_t\in\Qq [X_1\kdots X_r]$ such that $g_1f_1+\cdots +g_tf_t=1$.
By Proposition \ref{th:E0-1} it can be decided effectively whether
such $g_1\kdots g_t$ exist and if so, they can be computed.
Supposing such $g_1\kdots g_t$ exist,
by clearing the denominators of their coefficients
we find non-zero $b\in\Zz$ in $I\cap\Zz$. Using Corollary \ref{cor:E0-10}
we can check, for every divisor $a\in\Zz$ of $b$ whether $a\in I$.
In this manner we eventually find $a$ with $I\cap\Zz =(a)$.

If $a=1$ then $I=\Zz [X_1\kdots X_r]$
and we are done. Suppose that $a\not=1$.
We make a reduction to the case that $a=p$ is a prime number.
Suppose that $a=p_1\cdots p_k$
where $p_1\kdots p_k$ are prime numbers.
We may write $I=(p_1\cdots p_k,f_1\kdots f_t)$.
For $i=1\kdots k$, put
$I_i:=(p_1\cdots p_i,f_1\kdots f_t)$ and for
$i\in\{ 1\kdots k-1\}$ define
\[
J_i:= \{ f\in\Zz [X_1\kdots X_r]:\, p_1\cdots p_if\subseteq I_{i+1}\}.
\]
Then $I_i/I_{i+1}$ is isomorphic to the
additive group of $\Zz [X_1\kdots X_r]/J_i$.
Now if we are able to decide, for $i=1\kdots k-1$,
whether $\Zz [X_1\kdots X_r]/J_i$ is finite
and find a full system of representatives for this quotient,
we can do the same for $I_i/I_{i+1}$ and then, thanks to Lemma \ref{lem:E0-17},
for $\Zz [X_1\kdots X_r]/I$.

Using Theorem \ref{th:E0-9} we find a set of generators for $J_i$.
By what has been explained above, from this we can compute $b_i\in\Zz$
with $J_i\cap\Zz =(b_i)$.
Clearly, $p_{i+1}\in J_i$; hence $J_i\cap\Zz =(1)$ or $(p_{i+1})$.
The case $J_i=(1)$ being obvious,
it remains to check whether
$\Zz [X_1\kdots X_r]/J_i$ is finite if $J_i\cap\Zz =(p_{i+1})$.

Changing, notation, we see that it suffices to show, for any given ideal
$I$ of $\Zz [X_1\kdots X_r]$ with $I\cap\Zz =(p)$ for some prime $p$,
whether $\Zz [X_1\kdots X_r]/I$ is finite and if so, to compute a full system
of representatives for $\Zz [X_1\kdots X_r]$ modulo $I$.
We may assume that $I$ is given in the form $I=(p,f_1\kdots f_t)$,
with $f_1\kdots f_t\in\Zz [X_1\kdots X_r]$.
Given $f\in\Zz [X_1\kdots X_r]$, denote by $\overline{f}$ its reduction
modulo $p$, and put $\overline{I}=(\overline{f}_1\kdots\overline{f}_t)$.
Then $\Zz [X_1\kdots X_r]/I \cong \Ff_p[X_1\kdots X_r]/\overline{I}$.
So we have to decide whether this latter residue class ring is finite and if so,
to compute a full system of representatives for the residue classes.

For any positive integer $m$, denote by $V_m$ the set of residue classes
modulo $\overline{I}$ of all polynomials of degree $\leq m$ in
$\Ff_p[X_1\kdots X_r]$. This is a finite dimensional $\Ff_p$-vector space.
Recall that the \emph{Hilbert function} $H_{\overline{I}}$ of $\overline{I}$ is defined
by $H_{\overline{I}}(m):=\dim_{\Ff_p} V_m$.
It is known that there are an integer $m_{\overline{I}}$, and a polynomial $p_{\overline{I}}\in\Qq [X]$,
called the \emph{Hilbert polynomial} of $\overline{I}$,
such that $H_{\overline{I}}(m)=p_{\overline{I}}(m)$ for $m\geq m_{\overline{I}}$.
Now $\Ff_p[X_1\kdots X_r]/\overline{I}$ is finite if and only if $p_{\overline{I}}$ is constant, and this being the case, every residue class of
$\Ff_p[X_1\kdots X_r]$ modulo $\overline{I}$ is represented by a polynomial
of degree at most $m_{\overline{I}}$.
There is a general procedure, based on Gr\"{o}bner basis theory,
to compute $m_{\overline{I}}$ and $p_{\overline{I}}$, given a set of generators for
$\overline{I}$,
see \cite[\S\S15.1.1, 15.10.2]{Eisenbud94}. With this procedure one can decide
whether $\Ff_p[X_1\kdots X_r]/\overline{I}$ is finite. Subsequently, using Proposition \ref{th:E0-1}, one can select a full system of representatives modulo
$\overline{I}$ from the polynomials of degree $\leq m_{\overline{I}}$.

This completes the proof of Proposition \ref{th:E0-16}.
\end{proof}

For a finite extension $G$ of $K$, we denote by $A_G$ the integral closure
of $A$ in $G$. In particular, $A_K$ is the algebraic closure of $A$ in its
quotient field $K$. 

\begin{thmp}\label{cor:E0-17}
Assume that $A$ and a finite extension $G$ of $K$ are effectively given.
Then one can effectively compute a finite set of $A$-module generators
for $A_G$. Moreover, one can compute an ideal representation for $A_G$.
\end{thmp}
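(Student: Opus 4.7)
The plan is to split the problem into two parts: first compute the integral closure $A_K$ of $A$ in its quotient field (the normalisation problem), and then bootstrap from $A_K$ up to $A_G$. For the bootstrap, I would start from the given effective presentation $G = K[X]/(P)$, set $\alpha := X \bmod P$, and after multiplying $\alpha$ by a common denominator of the coefficients of $P$ assume that $\alpha$ is integral over $A$ with monic minimal polynomial $P \in A[X]$ of degree $n = [G:K]$ and nonzero discriminant $D := D(P) \in A$. The classical trace-dual computation, which relies on $A_K$ being integrally closed and $P \in A_K[X]$ being separable, yields the sandwich
\[
A_K[\alpha] \;\subseteq\; A_G \;\subseteq\; D^{-1}A_K[\alpha].
\]
Hence once $A_K$ is available as an effectively given finite $A$-module, $M := D^{-1}A_K[\alpha]$ becomes an explicit finite $A$-module of $G$ containing $A_G$.

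The main step is the computation of $A_K$. For this I would use a Grauert--Remmert style enlargement: start with $B_0 := A$, and at each stage compute the radical $J_i$ of the Jacobian ideal of the current ideal representation of $B_i$---an ideal which is nonzero whenever $B_i \neq (B_i)_K$ and is contained in the non-normal locus---then replace $B_i$ by $B_{i+1} := \mathrm{Hom}_{B_i}(J_i, J_i) \subseteq K$. Each stage is effective with the tools of this section: the Hom module, viewed in $K$ as $\{\xi \in K : \xi J_i \subseteq J_i\}$, is the solution set of a finite linear system over $B_i$, computable by Corollary \ref{cor:E0-11}; equality $B_{i+1} = B_i$ is decidable by Corollary \ref{cor:E0-12}; and an updated ideal representation for $B_{i+1}$ is produced by the procedure in the final paragraph. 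By Nagata's theorem $A_K$ is a finite $A$-module, so the ascending chain $B_0 \subseteq B_1 \subseteq \cdots \subseteq A_K$ stabilises, and when it does the Grauert--Remmert criterion identifies the stable ring with $A_K$. With $A_K$ in hand, an entirely analogous iteration carried out inside $M$, starting from $R_0 := A_K[\alpha]$ and enlarging by Homs of test ideals, produces $A_G$ after finitely many steps; all operations now reduce to linear algebra inside the known finite $A$-module $M$.

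Finally, to extract an ideal representation of $A_G$: given $A$-module generators $\omega_1 = 1, \omega_2, \ldots, \omega_m$ of $A_G$ together with the given generators $x_1, \ldots, x_r$ of $A$, the ring $A_G$ is generated as a $\Zz$-algebra by $x_1, \ldots, x_r, \omega_2, \ldots, \omega_m$, and its defining ideal in $\Zz[X_1, \ldots, X_r, Y_2, \ldots, Y_m]$ is generated by lifts of the defining relations of $A$ together with the multiplication relations $Y_iY_j - \sum_k c_{ijk}(\mathbf{X}) Y_k$, whose coefficients $c_{ijk} \in A$ are obtained by expanding $\omega_i \omega_j$ in the $\omega$-basis via Corollary \ref{cor:E0-11}. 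The hard part of the whole argument is the effective normalisation step: the outline (Jacobian test ideal, iterated Hom, Grauert--Remmert termination) is classical, but making every sub-routine execute with effective bounds over a finitely generated $\Zz$-algebra---rather than over a field, as in most expositions---requires carefully assembling the linear-algebra and ideal-theoretic primitives collected in this section.
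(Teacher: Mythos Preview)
Your approach to the first assertion is essentially what the paper's citations unpack: the paper simply refers to results of Nagata, de Jong, Matsumura, and Matsumoto, and de Jong's algorithm is precisely the Grauert--Remmert iteration (enlarge by $\mathrm{Hom}(J,J)$ for a test ideal $J$, check stabilisation) that you sketch. So on the main point the two proofs agree; you have just written out what the references contain.

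There is, however, a genuine gap in your final paragraph on the ideal representation. You claim that the kernel of the surjection
\[
\Zz[X_1,\ldots,X_r,Y_2,\ldots,Y_m]\longrightarrow A_G,\qquad X_i\mapsto x_i,\ Y_j\mapsto\omega_j,
\]
is generated by the defining relations of $A$ together with the multiplication relations $Y_iY_j-\sum_k c_{ijk}(\mathbf{X})Y_k$. Modulo these relations every polynomial reduces to an $A$-linear combination of $1,Y_2,\ldots,Y_m$, so the quotient ring, as an $A$-module, is generated by $m$ elements mapping to $\omega_1,\ldots,\omega_m$; but the quotient you describe is (when the structure constants are consistent) the \emph{free} $A$-module on these generators, whereas $A_G$ need not be $A$-free, since $A$ is an arbitrary finitely generated domain, not a PID. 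Thus the $A$-linear syzygies $\sum_i a_iY_i$ with $\sum_i a_i\omega_i=0$ in $A_G$ must also be thrown into the ideal. These syzygies are computable via Corollary~\ref{cor:E0-11}(i), so the repair is immediate, but as written your presentation may define a ring strictly larger than $A_G$. The paper sidesteps this by citing general presentation theorems (its Thms.~10.7.13, 10.7.16) rather than writing the generators down by hand.
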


\begin{proof}
Computing a finite set of $A$-module generators for $A_G$
follows from results
of Nagata \cite{Nagata56}, de Jong \cite{deJong98}, Matsumura \cite{Matsumura86}
and Matsumoto \cite{Matsumoto00}.
For more details, see \cite[Cor. 10.7.18]{EvertseGyory16}.
Then an ideal representation for $A_G$ can be computed using
\cite[Thms. 10.7.13, 10.7.16]{EvertseGyory16}.
\end{proof}

\begin{cor}\label{cor:E0-18}
Assume that $A$ is effectively given. Then one can can effectively decide
whether $(\frac{1}{n}A^+\cap A_K^+)/A^+$ is finite and if so,
compute a full system of representatives for $(\frac{1}{n}A^+\cap A_K^+)/A^+$.
\end{cor}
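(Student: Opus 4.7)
The plan is to reduce this to a direct application of the machinery already assembled in this section, in particular Proposition \ref{cor:E0-17}, Corollary \ref{cor:E0-12}, and Proposition \ref{th:E0-16}.

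First, I would observe that $\frac{1}{n}A^+$ is nothing other than the cyclic $A$-submodule $(\frac{1}{n})$ of $K$ generated by $\frac{1}{n}\in K$; in particular it is an effectively given finitely generated $A$-submodule of $K$. Next, by Proposition \ref{cor:E0-17} applied to the finite extension $K/K$, one can effectively compute a finite set of $A$-module generators for $A_K$; hence $A_K$ is also an effectively given finitely generated $A$-submodule of $K$.

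Having both $\frac{1}{n}A$ and $A_K$ available as effectively given finitely generated $A$-submodules of $K$, I would apply Corollary \ref{cor:E0-12}(ii) to compute a finite set of $A$-module generators for
\[
\M := \tfrac{1}{n}A \cap A_K.
\]
Since $A\subseteq \frac{1}{n}A$ and $A\subseteq A_K$, we have $A\subseteq \M$, and $A$ itself is an effectively given finitely generated $A$-submodule of $K$ (generated by $1$). Thus the pair $A\subseteq \M$ satisfies the hypotheses of Proposition \ref{th:E0-16}, which allows us to decide effectively whether $\M/A^+=(\tfrac{1}{n}A^+\cap A_K^+)/A^+$ is finite and, if so, to compute a full system of representatives for the quotient.

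There is essentially no obstacle here beyond verifying the hypotheses of the three cited results, since each of the required operations (computing $A_K$, intersecting two finitely generated $A$-submodules of $K$, deciding finiteness of a quotient of such modules and listing representatives) has already been shown to be effective earlier in this section. The only mild subtlety is the passage from $A_K \subseteq K$ to an effectively given set of $A$-module generators inside $K$, but this is precisely the content of Proposition \ref{cor:E0-17}.
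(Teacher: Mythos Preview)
Your proof is correct and follows exactly the same route as the paper, which simply states that the corollary is an immediate consequence of Proposition~\ref{cor:E0-17}, Corollary~\ref{cor:E0-12}(ii), and Proposition~\ref{th:E0-16}. You have merely spelled out how these three results are combined, with no substantive difference in strategy.
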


\begin{proof}
Immediate consequence of Proposition \ref{cor:E0-17},
Corollary \ref{cor:E0-12}, (ii) and
Proposition \ref{th:E0-16}.
\end{proof}

\begin{cor}\label{cor:E0-19}
Assume that $A$ and a \fek $\Omega$ are effectively given.
Further, let $\omega_2\kdots\omega_u\in\Omega$ be effectively given
and let $O$ be the $A$-module generated by $1,\omega_2\kdots \omega_m$.
\\[0.15cm]
(i) It can be effectively decided whether $O$ is an $A$-order of $\Omega$.
\\[0.15cm]
(ii) If $O$ is an $A$-order of $\Omega$, one effectively decide
whether  $(O\cap K)^+/A^+$ is finite, and if so, compute a full system of representatives for $(O\cap K)^+/A^+$.
\end{cor}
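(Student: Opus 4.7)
The plan is to reduce both parts of Corollary \ref{cor:E0-19} to a finite list of linear-algebraic questions over $A$ and $K$, and then invoke the effective tools developed in this section. Writing the presentation $\Omega=K[\theta]$ with $n:=[\Omega:K]$, I would first expand each given generator in the standard $K$-basis,
\[
\omega_i=\sum_{j=0}^{n-1}c_{ij}\theta^j\qquad (i=1\kdots m),
\]
with $\omega_1=1$; from the effective representations of $A$, $\Omega$ and the $\omega_i$, the coefficients $c_{ij}\in K$ are all effectively computable.

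For part (i), I observe that $O$ is an $A$-order precisely when (a) $O$ contains a $K$-basis of $\Omega$, and (b) $O$ is closed under multiplication. Indeed $1\in O$ gives $A\subseteq O$; if (b) also holds, then $O$ is a subring of $\Omega$ containing $1$ that is finitely generated as an $A$-module, and the standard determinant (Cayley--Hamilton) argument forces every element of $O$ to be integral over $A$, so $O\subseteq A_\Omega$. Condition (a) amounts to checking that the $m\times n$ matrix $(c_{ij})$ has rank $n$ over $K$, which is straightforward linear algebra. For condition (b), I would compute each product $\omega_i\omega_j$, expand it in the basis $1,\theta\kdots\theta^{n-1}$, and test whether it lies in the $A$-module spanned by $\omega_1\kdots\omega_m$; by Corollary \ref{cor:E0-11}(ii), each such $A$-linear membership test is effective.

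For part (ii), I would first compute $O\cap K$ as an effectively given finitely generated $A$-submodule of $K$. An element $\sum_l a_l\omega_l$ of $O$, with $a_l\in A$, lies in $K$ if and only if its coefficients on $\theta\kdots\theta^{n-1}$ vanish, i.e.\ $\sum_l a_l c_{lj}=0$ for $j=1\kdots n-1$. By Corollary \ref{cor:E0-11}(i) one effectively computes finite $A$-module generators $\mathbf{a}^{(1)}\kdots\mathbf{a}^{(t)}$ for the solutions in $A^m$; then $O\cap K=Ab_1+\cdots+Ab_t$, where $b_k:=\sum_l a_l^{(k)}c_{l0}\in K$. Since $A=A\cdot 1\subseteq O\cap K$ and both submodules are now effectively presented, Proposition \ref{th:E0-16} decides whether $(O\cap K)^+/A^+$ is finite and, if so, produces a full system of representatives.

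The only non-routine step is the sufficiency claim in part (i): once $O$ is closed under multiplication, the finite-module hypothesis forces $O\subseteq A_\Omega$ without any further integrality check on the individual $\omega_i$. This is the standard characterization of integrality through finitely generated faithful modules and poses no real obstacle, so beyond the two linear-algebra routines above and a single appeal to Proposition \ref{th:E0-16}, the argument is bookkeeping.
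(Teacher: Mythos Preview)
Your proposal is correct and follows essentially the same approach as the paper's proof: both verify the rank-$n$ condition on the coefficient matrix, test closure under multiplication via Corollary~\ref{cor:E0-11}(ii), and invoke the standard ``finitely generated module implies integral'' fact for the containment $O\subseteq A_\Omega$; for part~(ii) both solve the homogeneous linear system over $A$ via Corollary~\ref{cor:E0-11}(i), read off the constant coefficients to present $O\cap K$, and finish with Proposition~\ref{th:E0-16}. The only difference is notational (you write the coefficient matrix $(c_{ij})$ explicitly, the paper packages the same data as linear forms $l_0,\ldots,l_{n-1}$).
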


\begin{proof}
We assume that $\Omega =K[X]/(P)$ with $P\in K[X]$ an effectively given,
separable monic polynomial. Let $n:=[\Omega :K]=\deg P$ and $\theta := X\, {\rm mod}\, P$. Then $\{1,\theta\kdots \theta^{n-1}\}$ is a $K$-basis of
$\Omega$. Further, we assume that $\omega_2\kdots\omega_u$ are effectively 
given as $K$-linear combinations of $1,\theta\kdots \theta^{n-1}$.
Then we may express elements of $O$ as
$\sum_{k=0}^{n-1} l_k({\bf x})\theta^k$ with ${\bf x}\in A^m$, 
where $l_0\kdots l_{n-1}$ are linear forms from $K[X_1\kdots X_m]$.

(i) We first verify that the linear forms $l_0\kdots l_{n-1}$ have rank $n$
over $K$, to make sure that $O$ contains a $K$-basis of $\Omega$.
The next thing to verify is    
whether $\omega_i\omega_j$ is an $A$-linear combination
of $1,\omega_2\kdots\omega_u$ for $i,j=2\kdots u$. Compute $b_{ij}\in K$
such that $\omega_i\omega_j=\sum_{k=0}^{n-1}b_{ijk}\theta^k$. Then we have to verify
whether the system $l_k({\bf x})=b_{ijk}$ ($k=0\kdots n-1$) 
is solvable in ${\bf x}\in A^u$, for $i,j=2\kdots u$, and this can be done
by means of 
Corollary \ref{cor:E0-11} (ii). Lastly, it is a standard fact from algebra,
that if $A$ is a subring of a commutative ring $B$ that is finitely generated 
as an $A$-module, then $B$ is in fact integral over $A$. So in particular,
if we have verified that $O$ is closed under multiplication then it is 
automatically contained in $A_{\Omega}$.

(ii) Using Corollary \ref{cor:E0-11} (i) we can compute a finite set of
$A$-module generators, say ${\bf x}_1\kdots {\bf x}_v$ for the $A$-module
of ${\bf x}\in A^u$ with
$l_i({\bf x})=0$ for $i=1\kdots n-1$. Then $(O\cap K)^+$ is generated as an
$A$-module by $l_0({\bf x}_1)\kdots l_0({\bf x}_v)$. With these
generators for $(O\cap K)^+$ and Proposition \ref{th:E0-16},
we can check whether $(O\cap K)^+/A^+$ is finite, and if so, compute a full system of representatives.
\end{proof}

\section{The main proposition}\label{section10.2m}

We recall from \cite{EvertseGyory16} a central proposition from which 
Theorems \ref{t10.1.1m} and \ref{t10.1.4m} are deduced.
We keep the
notation from Section \ref{section10.1m}.

\begin{thmp}\label{p10.2.2m}
For any integral domain $A$ of characteristic $0$ that is
finitely generated over $\Zz$,
any finite extension $G$ of the quotient field of $A$,
any non-zero $\delta\in A$, and any integer $n\geq 2$,
all effectively given, one can
determine effectively a finite subset $\irF =\irF_{A,G,n,\delta}$ of
$G$ with the following property: if $F$ is any monic polynomial
from $A[X]$ of degree $n$ and discriminant $\delta$ having all its
zeros, say $\alpha_1,\ldots ,\alpha_n$, in $G$, then
\begin{equation}\label{e10.2.1m}
\alpha_i-\alpha_j\in\irF\,\,\mbox{ for } i,
j\in\left\{1,\ldots, n\right\}, i\neq j.
\end{equation}
\end{thmp}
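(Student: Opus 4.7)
The strategy is to encode the information in the differences $\beta_{ij}:=\alpha_i-\alpha_j$ via a family of unit equations, and then appeal to the effective unit equation theorem \cite[Cor.~1.2]{EvertseGyory13}. The input is the Siegel-type identity $\beta_{ij}+\beta_{jk}=\beta_{ik}$, which after division by $\beta_{ik}$ becomes an equation $u+v=1$ once we arrange for the $\beta_{ij}$ to be units in a suitable finitely generated domain.

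First I would use Proposition \ref{cor:E0-17} to compute an ideal representation for the integral closure $B:=A_G$. Since $F\in A[X]$ is monic, each zero $\alpha_i$ is integral over $A$, so all $\alpha_i$, and hence all $\beta_{ij}$, lie in $B$. The discriminant relation $\prod_{1\leq i<j\leq n}\beta_{ij}^{2}=\delta$ shows that each $\beta_{ij}$ divides $\delta$ in $B$, hence is a unit in the localization $B_S:=B[\delta^{-1}]$. An effective presentation of $B_S$ as a finitely generated integral domain is obtained by adjoining one variable $Y$ to the generators of $B$ and adding the relation $\delta Y-1$ to the defining ideal. Fixing the reference pair $(1,2)$ and dividing $\beta_{1k}-\beta_{2k}=\beta_{12}$ by $\beta_{12}$ yields, for every $k\in\{3,\ldots,n\}$,
\[
\frac{\beta_{1k}}{\beta_{12}}-\frac{\beta_{2k}}{\beta_{12}}=1,
\]
a unit equation in $B_S$. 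By \cite[Cor.~1.2]{EvertseGyory13} the set of its solutions in $B_S^{\times}\times B_S^{\times}$ is finite and effectively computable, so an effectively computable finite set $U_k\subset B_S$ containing all admissible values of $u_k:=\beta_{1k}/\beta_{12}$ can be produced.

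Setting $u_1:=0$ and $u_2:=1$, the identity $\beta_{ij}=\beta_{1j}-\beta_{1i}$ gives $\beta_{ij}/\beta_{12}=u_j-u_i$, so every normalized difference lies in an effectively computable finite subset of $G$. It then remains to pin down $\beta_{12}$ itself. For each admissible tuple $(u_3,\ldots,u_n)\in U_3\times\cdots\times U_n$ the discriminant relation reads
\[
\beta_{12}^{\,n(n-1)}\;=\;\delta\bigg/\prod_{1\leq i<j\leq n}(u_j-u_i)^{2},
\]
so $\beta_{12}$ is one of at most $n(n-1)$ roots in $G$ of an explicitly known element of $G$; these roots can be listed effectively by standard effective factorization in $G[X]$. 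Taking $\irF$ to be the union, over all admissible tuples and all corresponding roots, of the elements $(u_j-u_i)\beta_{12}$ yields the required finite effectively computable subset of $G$.

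The principal obstacle is the invocation of \cite[Cor.~1.2]{EvertseGyory13}: one must manufacture an effective presentation of $B_S=B[\delta^{-1}]$ as a finitely generated integral domain from the effective data for $A$, $G$ and $\delta$, which requires successive applications of Proposition \ref{cor:E0-17} and the ideal-theoretic constructions of Section \ref{section10.3m}, and one must then trust the (deep) effectivity built into the cited unit equation theorem. The remaining ingredients — root-finding of a single element in $G$, taking finitely many unions, and converting between representations — are routine effective commutative algebra.
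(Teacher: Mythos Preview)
Your proposal is correct and follows precisely the standard route the paper alludes to: the paper itself does not prove this proposition but simply quotes it as \cite[Prop.~10.2.1]{EvertseGyory16} and remarks that the key input is \cite[Cor.~1.2]{EvertseGyory13} on unit equations over finitely generated domains. Your sketch---pass to $B=A_G$, invert $\delta$ so that all $\beta_{ij}$ become units, apply the Siegel identity to reduce the ratios $\beta_{1k}/\beta_{12}$ to solutions of $u-v=1$ in $B[\delta^{-1}]^\times$, invoke the effective unit equation theorem, and then recover $\beta_{12}$ from $\beta_{12}^{n(n-1)}=\delta/\prod_{i<j}(u_j-u_i)^2$---is exactly the argument behind the cited proposition, so there is nothing to contrast.

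Two small points worth tightening: when $n=2$ there is no index $k\geq 3$ and hence no unit equation; your final step $\beta_{12}^2=\delta$ handles this case directly, but it is worth saying so explicitly. And for a candidate tuple $(u_3,\ldots,u_n)$ one must discard those with $u_i=u_j$ for some $i\neq j$ (equivalently $\prod_{i<j}(u_j-u_i)=0$) before extracting the $n(n-1)$-th root, since such tuples cannot arise from an $F$ with $D(F)=\delta\neq 0$.
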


\begin{proof}
This is Proposition 10.2.1 of \cite{EvertseGyory16}.
Its proof is based on
Corollary 1.2 of \cite{EvertseGyory13} on
unit equations over finitely generated integral domains. 
\end{proof}

\section{Proof of Theorem \ref{t10.1.1m}}\label{section10.5m}

We start with a preliminary lemma.

\begin{lem}\label{l10.5.1m}
For every integral domain $A$ finitely generated over $\Zz$ and every
two monic polynomials $F_1$, $F_2\in A[X]$ with at least two
distinct zeros, all effectively given, we can determine effectively whether $F_1$, $F_2$ are $A$-equivalent.
\end{lem}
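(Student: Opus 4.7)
The plan is to exploit the observation that in characteristic $0$, an $A$-equivalence $F_2(X)=F_1(X+a)$ pins down $a$ uniquely from the second-highest coefficient, so the whole decision reduces to a single membership test $a\in A$ followed by a polynomial identity check.

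First, I would check that $F_1$ and $F_2$ have the same degree $n$, since otherwise they cannot be equivalent. Writing $F_1(X)=X^n+c_1X^{n-1}+\cdots$ and $F_2(X)=X^n+d_1X^{n-1}+\cdots$, any $a\in A$ with $F_2(X)=F_1(X+a)$ must, by comparing the coefficient of $X^{n-1}$ in
\[
F_1(X+a)=X^n+(na+c_1)X^{n-1}+\cdots ,
\]
satisfy $a=(d_1-c_1)/n$. Thus the candidate translation is uniquely determined as an element of $K$; in fact the hypothesis of at least two distinct zeros is not strictly needed for uniqueness, since in characteristic $0$ the identity $F_1(X)=F_1(X+b)$ already forces $b=0$ by the same comparison.

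Next, I would decide whether this candidate $a=(d_1-c_1)/n\in K$ lies in $A$. The element $d_1-c_1\in A$ is computable from the given coefficients, and $a$ is represented as the pair $(d_1-c_1,n)$. The test $a\in A$ amounts to asking for $y\in A$ with $n\cdot y = d_1-c_1$, which is the scalar case of the solvability question in Corollary \ref{cor:E0-11}(ii) applied to the $1\times 1$ linear system over $A$; this is effective, and when a solution exists a representation of $a$ as an element of $A$ is produced. If no such $y$ exists, $F_1$ and $F_2$ are not $A$-equivalent.

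Finally, if $a\in A$ has been found, I would form $F_1(X+a)\in A[X]$ by a routine expansion and compare it coefficient by coefficient with $F_2$. Equality of two given elements of $A$ reduces to ideal membership in the defining ideal $I$ of $A$ and is therefore decidable by Proposition \ref{th:E0-1}(i); the polynomials are $A$-equivalent if and only if all coefficient comparisons succeed. The only nontrivial step is the membership test $a\in A$, and it is immediately reduced to Corollary \ref{cor:E0-11}(ii); all remaining work is routine polynomial arithmetic together with equality testing in $A$.
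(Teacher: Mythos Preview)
Your argument is correct and self-contained. The paper itself does not give a proof of this lemma but simply cites \cite[Lemma~10.4.1]{EvertseGyory16}, so there is no in-paper proof to compare against; your approach---extracting the unique candidate shift $a=(d_1-c_1)/n$ from the second coefficient, testing $a\in A$ via Corollary~\ref{cor:E0-11}(ii), and then checking the identity $F_1(X+a)=F_2(X)$---is exactly the natural one and is very likely what the cited lemma does as well.

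One small citation fix: for the final equality test of coefficients in $A$ you invoke Proposition~\ref{th:E0-1}(i), but that proposition is stated for polynomial rings over $\Qq$ or $\Ff_p$, not over $\Zz$. The correct reference within this paper is Corollary~\ref{cor:E0-10} (ideal membership over $\Zz$), or simply the remark in Section~\ref{section10.1m} that equality in $A$ is decidable via an ideal membership algorithm for $I\subset\Zz[X_1,\ldots,X_r]$. This does not affect the validity of your argument.
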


\begin{proof}
This is \cite[Lemma 10.4.1]{EvertseGyory16}.
\end{proof}

Henceforth, the integral domain $A$ is given effectively in the form
\[
\Zz[X_1,\ldots ,X_r]/(f_1,\ldots ,f_s)=\Zz[x_1,\ldots ,x_r]
\]
where $x_i$ is
the residue class of $X_i$ mod $(f_1,\ldots ,f_s)$ for $i=1,\ldots
,r$. Further the finite extension $G$ of the quotient field $K$
of $A$ is given in the form $K[X]/(P)$ or $K(w)$, where $w$ is the
residue class of $X$ $\mbox{mod}\, P$. The polynomial $P$ may be
represented as $b_0^{-1}\sum_{i=0}^{d}b_iX^{d-i}$
with $b_0,\ldots ,b_d$ given as polynomials in $x_1,\ldots ,x_r$
with integer coefficients.
Define
$$
u:=b_0w.
$$
Then $u$ has minimal polynomial
\begin{equation}\label{e10.5.3m}
Q=X^{d}+\sum_{i=1}^{d}b_ib_0^{d-1-i}X^{d-i}=:X^{d}+\sum_{i=1}^{d}c_iX^{d-i}
\in A[X]
\end{equation}
over $K$. Now clearly, $G=K(u)$, $u$ is integral over $A$, and
every element of $G$ can be expressed in the form
$\sum_{i=0}^{d-1}(a_i/b)u^{i}$ with $a_0,\ldots ,a_{d-1}, b\in A$,
given as polynomials with integer coefficients in
$x_1,\ldots ,x_r$.

\begin{proof}[Proof of \thmref{t10.1.1m}]
Let $A,G,n,\delta$ be effectively given and satisfy the conditions
of \thmref{t10.1.1m}.
Further, let
$\irF$ be the finite effectively determinable set
from \propref{p10.2.2m}.

Take a monic polynomial $F$ from $A[X]$ with \eqref{e10.1m}.
Then $F$ has all its zeros in $G$, say $F(X)=(X-\alpha_1)\cdots
(X-\alpha_n)$, with $\alpha_1,\ldots ,\alpha_n\in G$. By
\propref{p10.2.2m} we have
$$
\alpha_i-\alpha_j\in\irF\,\,\mbox{ for } i,j\in\left\{1,\ldots ,n\right\}\,\mbox{ with } i\neq j.
$$
Recall that $\irF$ is finite, and effectively
determinable in terms of $A$, $G$, $n$, $\delta$. For each tuple
$\left(\gamma_{ij}:\,\, i,j\in\left\{1,\ldots ,n\right\}, i\neq
j\right)$ with elements from $\irF$ we consider the
polynomials $F$ with \eqref{e10.1m} and with
$\alpha_i-\alpha_j=\gamma_{ij}$ for $i,j\in\left\{1,\ldots
,n\right\}$, $i\neq j$. That is, we consider polynomials $F$ such
that
\begin{equation}\label{e10.5.4m}
\left\{\begin{array}{l}F\in A[X], F \mbox{ monic }, \deg F=n,
D(F)=\delta,\\
F=(X-\alpha_1)\cdots (X-\alpha_n) \mbox{ for some }
\alpha_1,\ldots ,\alpha_n\in G
\\
\mbox{such that }\alpha_i-\alpha_j=\gamma_{ij} \mbox{ for }
i,j\in\left\{1,\ldots ,n\right\}, i\neq j,\end{array}\right.
\end{equation}
Our proof will be completed as follows.
We show that for each tuple $\{\gamma_{ij}\}$
it can be decided effectively whether a
polynomial $F$ with \eqref{e10.5.4m} exists.
If so, we show that the polynomials with \eqref{e10.5.4m}
lie in finitely many $A$-equivalence classes,
and determine effectively a full system of representatives for them.
Then from the union of these systems,
we extract a full system of representatives for the $A$-equivalence
classes of solutions of \eqref{e10.1m}.

Fix elements $\gamma_{ij}$ from $\irF$ ($1\leq i,j\leq n$, $i\not= j$).
Suppose there is a polynomial $F$ with \eqref{e10.5.4m}.
For this polynomial we have
\begin{equation}\label{e10.5.5m}
n\alpha_i=y+\gamma_i\,\mbox{ for } i=1,\ldots ,n,
\end{equation}
with $y=\alpha_1+\cdots +\alpha_n$,
$\gamma_i=\sum_{j=1}^{n}\gamma_{ij}$ for $i=1,\ldots ,n$. Here
$\gamma_1,\ldots ,\gamma_n$ are fixed and $y,\alpha_1,\ldots
,\alpha_n$ are variables. The number $y$ is a coefficient of $F$,
so $y\in A$. Further, if there is a polynomial $F$ with
\eqref{e10.5.4m}, then
\begin{equation}\label{e10.5.6m}
(X-\gamma_1)\cdots
(X-\gamma_n)=n^{n}F\left(\frac{X+y}{n}\right)\in A[X].
\end{equation}
The coefficients of $(X-\gamma_1)\cdots (X-\gamma_n)$ belong to
$G$. It can be checked whether they belong to $K$,
and then by means of  and by Corollary \ref{cor:E0-11} (ii), it can be checked whether they
belong to $A$. If not so, there is no polynomial with
\eqref{e10.5.4m}. So we assume henceforth that
$(X-\gamma_1)\cdots (X-\gamma_n)\in A[X]$.
Then $\gamma_1,\ldots ,\gamma_n\in A_G$.

Using Proposition \ref{cor:E0-17}, we compute a finite set of $A$-module
generators for $A_G$. From this,
we deduce a system
$\left\{\mathbf{a}_1,\ldots ,\mathbf{a}_t\right\}$ of $A$-module
generators for $A_G^n$.
The numbers $\alpha_1,\ldots ,\alpha_n$ from \eqref{e10.5.4m} are in $A_G$.
So there
are $x_1,\ldots ,x_t\in A$ such that
\begin{equation}\label{e10.5.7m}
\left(\begin{array}{c}\alpha_1\\ \vdots \\
\alpha_n\end{array}\right)=x_1\mathbf{a}_1+\cdots
+x_t\mathbf{a}_t,
\end{equation}
and we can rewrite \eqref{e10.5.5m} as
\begin{equation}\label{e10.5.8m}
x_1(n\mathbf{a}_1)+\cdots
+x_t(n\mathbf{a}_t)=y\left(\begin{array}{c}1\\
\vdots
\\ 1\end{array}\right)+\left(\begin{array}{c}\gamma_1 \\ \vdots \\
\gamma_n\end{array}\right).
\end{equation}
By linear algebra, we can determine a maximal
$K$-linearly-independent subset of $\left\{n\mathbf{a}_1,\ldots
,n\mathbf{a}_t,(1,\ldots ,1)^{T}, (\gamma_1,\ldots
,\gamma_n)^{T}\right\}$,
say $\left\{\mathbf{b}_1,\ldots ,\mathbf{b}_m\right\}$.
Further, we can compute expressions for $n\mathbf{a}_1,\ldots
,n\mathbf{a}_t,(1,\ldots ,1)^{T}, (\gamma_1,\ldots ,\gamma_n)^{T}$
as $K$-linear combinations of $\mathbf{b}_1,\ldots ,\mathbf{b}_m$.
By substituting
these into \eqref{e10.5.8m} and equating the coordinates of
\eqref{e10.5.8m}, we obtain a system of inhomogeneous linear
equations:
\begin{equation}\label{e10.5.9m}
M\mathbf{x}=\mathbf{b}\,\mbox{ in } \mathbf{x}=(x_1,\ldots
,x_t,y)^{T}\in A^{t+1}
\end{equation}
where the matrix $M$ and vector $\mathbf{b}$ have their entries in
$K$. Then using Corollary \ref{cor:E0-11} we can decide whether
\eqref{e10.5.9m} is solvable and if so, compute a solution.
Translating this back to \eqref{e10.5.8m}, we can decide whether
\eqref{e10.5.8m} is solvable and if so, compute a solution.

If \eqref{e10.5.8m} is unsolvable, then there is no polynomial $F$
with \eqref{e10.5.4m}. Assume \eqref{e10.5.8m} is solvable and
compute a solution, say $(x_{10},\ldots ,x_{t0},y_0)$ $\in
A^{t+1}$. Thus,
$\sum_{i=1}^{t}x_{i0}(n\mathbf{a}_i)-y_0(1,\ldots
,1)^{T}=(\gamma_1,\ldots ,\gamma_n)^{T}$. Put
\begin{equation}\label{e10.5.10m}
\left(\begin{array}{c}\alpha_{10} \\ \vdots \\
\alpha_{n0}\end{array}\right):=x_{10}\mathbf{a}_1+\cdots
+x_{n0}\mathbf{a}_t.
\end{equation}
Then
\begin{equation}\label{e10.5.11m}
n\alpha_{i0}=y_0+\gamma_i\,\mbox{ for } i=1,\ldots ,n\,\mbox
{ with } y_0\in A.
\end{equation}
Now let again $F$ be an arbitrary polynomial with \eqref{e10.5.4m}
and let $y$ be as in \eqref{e10.5.5m}. From \eqref{e10.5.5m},
\eqref{e10.5.11m} we infer that
\begin{equation}\label{e10.5.12m}
\alpha_i-\alpha_{i0}=\frac{y-y_0}{n}=:a\,\mbox{ for } i=1,\ldots ,n.
\end{equation}
Clearly, $a\in\frac{1}{n}A$.
Identity \eqref{e10.5.10m} implies that
$\alpha_{10},\ldots ,\alpha_{n0}\in A_G$.
Hence $a$ is
integral over $A$. So in fact, $a\in\frac{1}{n}A\cap A_K$.

By Corollary \ref{cor:E0-18}, we can compute a full system of representatives,
say $\{\theta_1,\ldots,\theta_h\}$ for
$\big(\frac{1}{n}A^+\cap\overline{A}^+\big)/A^+$.
For $j=1,\ldots, h$, put
\[
F_j(X):=(X-\alpha_{10}-\theta_j)\cdots (X-\alpha_{n0}-\theta_j).
\]
For some
$j\in\{ 1,\ldots, h\}$ we have
$a=\theta_j+c$ for some $c\in A$.
Then \eqref{e10.5.12m} implies that $\alpha_i=\alpha_{i0}+\theta_j+c$
for $i=1,\ldots, n$, and so $F(X)=F_j(X-c)$.
Hence $F$ is $A$-equivalent to $F_j$.

The polynomials $F_1,\ldots, F_h$ can be determined effectively.
Their coefficients belong to $K$ and using Corollary \ref{cor:E0-11}
we can select those polynomials that have their coefficients in $A$.
Thus, for each tuple $\{ \gamma_{ij}\}$ with $\gamma_{ij}\in\irF$
we can compute a finite system of polynomials from $A[X]$
such that every polynomial with \eqref{e10.5.4m} is
$A$-equivalent to one of them. By taking the union of these systems
for all tuples $\{ \gamma_{ij}\}$, we effectively determine a finite list of
polynomials from $A[X]$ such that every polynomial with \eqref{e10.1m} is
$A$-equivalent to at least one of them.
For each polynomial from the list we can effectively decide
whether it satisfies
\eqref{e10.1m} and if not so, remove it.
Finally, assuming the list is ordered,
by means of Lemma \ref{l10.5.1m} we can effectively decide whether a polynomial
from the list is $A$-equivalent to an earlier polynomial in the list
and if so, remove it.
This leaves us with a full system of representatives
for the $A$-equivalence classes of polynomials with
\eqref{e10.1m}.
This completes the proof of Theorem \ref{t10.1.1m}.
\end{proof}

\section{Proof of Theorem \ref{t10.1.4m}}\label{section10.6m}

Let $A$ be an integral domain finitely generated over $\Zz$,
effectively given as usual in the form
$\Zz[X_1,\ldots ,$ $X_r]/(f_1,\ldots ,f_s)=\Zz[x_1,\ldots ,x_r]$,
where $f_1,\ldots,f_s$ $\in\Zz[X_1,\ldots ,X_r]$ and where
$x_i$ is the residue class of
$X_i$ $\mbox{mod } (f_1,\ldots
,f_s)$ for $i=1,\ldots ,r$. Denote by $K$ the quotient field of
$A$. Let $\Omega$ be a \fk, effectively given in the form
$K[X]/(P)=K[\theta]$, where $P\in K[X]$ is a monic polynomial
without multiple zeros, and $\theta =\mbox{mod } P$.

We need some results from \cite[\S10.7]{EvertseGyory16}.
Using \cite[Cor. 10.7.7]{EvertseGyory16} we can construct the splitting field of
$P$ over $K$; call this $G$. By means 
of \cite[Cor. 10.7.8]{EvertseGyory16}
we can compute $w$ such that $G=K(w)$, together with the minimal
polynomial of $w$ over $K$. As was explained in
Section \ref{section10.5m}, we can compute from this another
representation for $G$ of the form $K(u)$, where $u$ is integral
over $A$, together with the monic minimal polynomial $Q$ of $u$
over $K$. Elements of $G$ are always given in the form
$\sum_{i=0}^{d-1}(a_i/b)u^{i}$ where $d=[G:K]$ and $a_0,\ldots
,a_{d-1}, b$ are elements of $A$.

The polynomial $P$ factorizes as $(X-\theta^{(1)})\cdots
(X-\theta^{(n)})$ in $G$, and by 
\cite[Cor. 10.7.8]{EvertseGyory16} we can
compute expressions of $\theta^{(1)},\ldots ,\theta^{(n)}$ as
$K$-linear combinations of $1, u,\ldots ,u^{d-1}$. With these
expressions we can compute, for any element
$\alpha=\sum_{i=0}^{n-1}c_i\theta^{i}\in\Omega$ with $c_0,\ldots
,c_{n-1}\in K$, its images
$\alpha^{(j)}=\sum_{i=0}^{n-1}c_i(\theta^{(j)})^{i}$ $(j=1,\ldots
,n)$ under the $K$-homomorphisms of $\Omega$ to $G$.

We start with a lemma.

\begin{lem}\label{l10.6.2m}
For any two effectively given $\alpha_1,\alpha_2\in O$ with
$K[\alpha_1]=K[\alpha_2]=\Omega$,
we can decide effectively whether $\alpha_1$, $\alpha_2$ are $A$-equivalent.
\end{lem}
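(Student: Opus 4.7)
The plan is to reduce $A$-equivalence of $\alpha_1$ and $\alpha_2$ to deciding whether the difference $\beta := \alpha_1-\alpha_2$ lies in $A$. Since $\alpha_1,\alpha_2$ are effectively given as $A$-linear combinations of the generators $\omega_1=1,\omega_2,\ldots,\omega_m$ of $O$, the element $\beta\in O$ is computed effectively in the same form. Using the known expressions of the $\omega_i$ as $K$-linear combinations of the canonical basis $1,\theta,\ldots,\theta^{n-1}$ of $\Omega$ over $K$, I obtain an effective expression
\[
\beta=\sum_{j=0}^{n-1}c_j\theta^j
\]
with each $c_j\in K$ represented as a ratio of polynomials in $x_1,\ldots,x_r$.

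Since $\{1,\theta,\ldots,\theta^{n-1}\}$ is a $K$-basis of $\Omega$, the image of $K$ in $\Omega$ consists precisely of the $K$-multiples of $1$. Hence $\beta\in K$ if and only if $c_1=c_2=\cdots=c_{n-1}=0$. After clearing denominators, each such equality reduces to checking whether a polynomial in $\Zz[X_1,\ldots,X_r]$ lies in the ideal $I=(f_1,\ldots,f_s)$, which is decidable by Proposition \ref{th:E0-1} (i). If some $c_j$ with $j\geq 1$ is nonzero, then $\beta\notin K\supseteq A$, so we conclude that $\alpha_1,\alpha_2$ are not $A$-equivalent.

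Otherwise $\beta=c_0\in K$, and it remains to decide whether $c_0\in A$. This is the solvability of the single linear equation $x=c_0$ in $x\in A$, which is an immediate instance of Corollary \ref{cor:E0-11} (ii). I do not anticipate any substantial obstacle: the entire procedure consists of translating the equivalence test into a basis representation over $K$ and invoking the effective linear algebra over $A$ developed in Section \ref{section10.3m}. The only mildly delicate point is isolating when an element of $\Omega$ lies in the image of $K$, but this is routine once $\beta$ has been expressed in the canonical basis. Note that the hypothesis $K[\alpha_1]=K[\alpha_2]=\Omega$ plays no role in the above argument and is included only for consistency with the use of this lemma in the sequel.
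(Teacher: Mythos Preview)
Your argument is correct: reducing $A$-equivalence to the test $\alpha_1-\alpha_2\in A$, reading off the coefficients of $\beta$ in the $K$-basis $1,\theta,\ldots,\theta^{n-1}$, and then invoking ideal membership and Corollary~\ref{cor:E0-11}~(ii) is a valid and self-contained procedure. The paper itself does not give a proof here but only cites \cite[Lemma~10.5.1]{EvertseGyory16}, so there is no in-paper argument to compare against; your direct approach is the natural one and, as you observe, does not actually require the hypothesis $K[\alpha_1]=K[\alpha_2]=\Omega$.
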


\begin{proof}
See \cite[Lemma 10.5.1]{EvertseGyory16}.
\end{proof}

\begin{proof}[Proof of \thmref{t10.1.4m}]
Let $A,\Omega ,O$ be the effectively given integral domain, \fk, and
$A$-order in $\Omega$. Assume that
$(O\cap K)^+/A^+$ is finite. Let $\left\{\omega_1=1,\ldots
,\omega_m\right\}$ be the effectively given system of $A$-module generators
for $O$. Further, let $n=[\Omega:K]$, $n\geq 2$ and let
$\delta$ be the given element of $A$. 
Lastly, let $G$ be the field
defined above, given in the form $K(u)$ with $u$ integral over
$A$.

Recall that by Proposition \ref{th:E0-16}, we can compute an
ideal representation for the integral closure $A_K$ of $A$,
i.e., $A_K$ is effectively given as an integral domain in the usual sense.
So we can apply \propref{p10.2.2m} with $A_K$ instead of $A$.
Let $\irF'$ be the finite set $\irF$ from \propref{p10.2.2m}
but taken with $A_K$ instead of $A$.
This set
can be computed effectively in terms of
$A_K$, $G$, $\delta$, hence in terms of $A$, $\Omega$, $\delta$. 
Now if $\alpha$ is an
element of $O$ with \eqref{e10.1.6m}, i.e.,
$D_{\Omega/K}(\alpha)=\delta$, then $\alpha\in A_{\Omega}$, hence
$F_\alpha(X):=(X-\alpha^{(1)})\cdots (X-\alpha^{(n)})$ has its
coefficients in $A_K$, we have $D(F_\alpha)=\delta$, and $F_\alpha$ has its
zeros in $G$. Hence
$$
\alpha^{(i)}-\alpha^{(j)}\in\irF\,'\
\mbox{ for } i,j\in\left\{1,\ldots ,n\right\}, i\neq j.
$$

We now pick elements $\gamma_{ij}$ from $\irF\,'$
and consider the elements $\alpha$ with
\begin{equation}\label{e10.6.4m}
\left\{
\begin{array}{l}
\alpha\in O,\,\, D_{\Omega/K}(\alpha)=\delta,\\
\alpha^{(i)}-\alpha^{(j)}\in\gamma_{ij}\,\mbox{ for }
i,j\in\left\{1,\ldots ,n\right\}, i\neq j.
\end{array}
\right.
\end{equation}
We show that it can be decided effectively whether
\eqref{e10.6.4m} is solvable and if so, compute a solution of
\eqref{e10.6.4m}. Notice that \eqref{e10.6.4m} is certainly
unsolvable if $\prod_{1\leq i<j\leq n}\gamma_{ij}^{2}\neq \delta$.
Assume that $\prod_{1\leq i<j\leq n}\gamma_{ij}^{2}=\delta$. Then
the condition $D_{\Omega/K}(\alpha)=\delta$ can be dropped.
Writing $\alpha$ as $\sum_{k=1}^{m}x_k\omega_k$ with $x_1,\ldots
,x_m\in A$, we can rewrite \eqref{e10.6.4m} as
\begin{equation}\label{e10.6.5m}
\sum_{k=1}^{m}x_k\left(\omega_k^{(i)}-\omega_k^{(j)}\right)=\gamma_{ij}\,\mbox{
for } i,j\in\left\{1,\ldots ,n\right\}, i\neq j.
\end{equation}
Clearly, $(x_1,\ldots ,x_m)$ is a solution of \eqref{e10.6.5m} in
$A^{m}$ if and only if $\alpha:=\sum_{k=1}^{m}x_k\omega_k$ is a
solution of \eqref{e10.6.4m}.

By expressing $\omega_k^{(i)}-\omega_k^{(j)}$ and the numbers
$\gamma_{ij}$ as $K$-linear combinations of $1,u,\ldots ,u^{d-1}$
where $d=[G:K]$ and $u$ is the generating element of $G$ over
$K$, we can rewrite \eqref{e10.6.5m} as a system of inhomogeneous
linear equations in $A^m$ like in Corollary \ref{cor:E0-11} (ii).
Thus, it can be
decided effectively whether \eqref{e10.6.5m} is solvable, and if
so, a solution can be computed. Equivalently, it can be decided
effectively whether \eqref{e10.6.4m} is solvable and if so, a
solution can be computed.

For each choice of $\gamma_{ij}\in\irF '$ ($1\leq i,\leq n$, $i\not= j$),
we check if \eqref{e10.6.4m} is solvable and if so,
we compute a solution. Let $\mathcal{T}=\{ \alpha_1,\ldots, \alpha_g\}$
be the finite set obtained in this manner.
By Corollary \ref{cor:E0-19} (ii) we can compute a full system of representatives
$\{ \theta_1,\ldots,\theta_h\}$ for $(O\cap K)^+/A^+$.
Using Lemma \ref{l10.6.2m}, we can compute a maximal subset of
$\{ \alpha_i+\theta_j:\, i=1,\ldots, g,\, j=1,\ldots, h\}$ any two distinct
elements of which are not $A$-equivalent, say $\mathcal{U}$.
We show that $\mathcal{U}$ is a full system of representatives for the
$A$-equivalence classes of solutions of \eqref{e10.1.6m}.

Let $\alpha$ be a solution of \eqref{e10.1.6m}.
Then $\alpha$ satisfies \eqref{e10.6.4m} for certain $\gamma_{ij}\in\irF '$.
Let $\alpha_0$ be an element from $\mathcal{T}$ satisfying
\eqref{e10.6.4m} for these $\gamma_{ij}$. Then
$\alpha^{(i)}-\alpha^{(j)}=\alpha_0^{(i)}-\alpha_0^{(j)}$ for
$i,j\in\left\{1,\ldots ,n\right\}$, hence
$$\alpha^{(1)}-\alpha_0^{(1)}=\cdots =\alpha^{(n)}-\alpha_0^{(n)}.$$
It follows that $\alpha-\alpha_0=:a\in O\cap K$. Hence
$a=\theta_j+c$ for some $j\in\{ 1,\ldots, h\}$ and $c\in A$, and so,
$\alpha =\alpha_0+\theta_j+c$. Now clearly,
$\alpha$ is strongly $A$-equivalent to an element of $\mathcal{U}$.
This completes our proof
of \thmref{t10.1.4m}.
\end{proof}

\end{document}